\newtheorem{thm}{Theorem}[section]
\newtheorem{lem}[thm]{Lemma}
\newtheorem{cor}[thm]{Corollary}
\newtheorem{prop}[thm]{Proposition}
\newtheorem{defn}[thm]{Definition}
\def \F {\mathbb{F}}
\def \cU {\mathcal{U}}
\def \cH {\mathcal{H}}
\def \tsr {\otimes}
\def \ker {\mathrm{ker}~}
\def \im {\mathrm{im}~}
\def \infZ {\mathbb{Z}_{+, \infty} }
\def \Z {\mathbb{Z}}
\def \PC {\mathcal{PC} }
\def \d {\delta }
\def \gdim {\mathrm{gdim}}
\def \Fx {\mathbb{F}[x]}
\def \fxc [#1][#2] {\Fx || #1 || \{ #2 \} }
\def \fc [#1][#2] {\F || #1 || \{ #2 \} }
\def \flt {\mathcal{F}}
\newcommand{\U}[1][\infty] {U_{ n , s , #1 }}
\title{Decomposition of Persistent Homology and Spectral Sequences}
\author{Peiqi Yang}
\address{Department of Mathematics, The George Washington University \\ Phillips Hall, 801 22nd St. NW, Room 739,
Washington, DC 20052, USA}
\email{pqyang@gwu.edu}
\author{Yingfeng Hu}
\address{Department of Mathematics, The George Washington University \\ Phillips Hall, 801 22nd St. NW, Room 739,
Washington, DC 20052, USA}
\email{yingfenghu@gwu.edu}
\author{Hao Wu}
\address{Department of Mathematics, The George Washington University \\ Phillips Hall, 801 22nd St. NW, Room 739,
Washington, DC 20052, USA}
\email{haowu@gwu.edu}
\date{}
\begin{document}

\begin{abstract}
We study the relation between the persistent homology and the spectral sequence of a filtered chain complex over a field. Our method is based on a decomposition of the persistent homology. We demonstrate that, under fairly general assumptions, these two algebraic structures capture the same information from the filtered chain complex.
\end{abstract}

\subjclass[2020]{Primary 55N31, 55T05}

\keywords{filtered chain complex, persistent homology, spectral sequence, exact couple}

\maketitle

\section{Introduction}

\subsection{Background}

Filtered chain complexes arise naturally in many applications of algebraic topology. For example, given a point cloud in a metric space, one can consider the abstract simplicial complex consisting of all the possible abstract simplexes with vertices in this cloud. This abstract simplicial complex gives rise a simplicial chain complex, which, by itself, has trivial homology. But, when equipped with the filtration induced by the diameters of simplexes, the filtered chain complex captures much topological information of the point cloud. 

There are two main approaches to extracting homological information from a filtered chain complex. The more classical approach is via spectral sequences defined by Jean Leray \cite{MR16665}, which are hard to compute in most cases. More recently, in the study of point clouds, most researchers opt to use the persistent homology because there are efficient algorithms to compute it \cite{ComTop,ComPH}. 

A natural question is how different or similar are the homological data obtained by these two approaches. Saugata Basu and Laxmi Parida \cite{SSPH} established explicit relations between the spectral sequences and the persistent homology for filtered chain complexes over a field. In fact, similar relations are implicit in the work of the third author \cite{EqKR} in the context of the Khovanov-Rozansky homology in knot theory. In this paper, we apply the arguments in \cite{EqKR} to more general filtered chain complexes over fields and, among other things, recover the results in \cite{SSPH}. Our method is based on decomposition theorems of persistent homology and spectral sequences, which allow us to avoid complicated exact sequence arguments and provide a direct and intuitive explanation why the two approaches yield exactly the same information from a filtered chain complex over a field under fairly general assumptions. 

\subsection{Statement of results} In the current paper, we fix a base field $\F$ and consider filtered chain complexes of $\F$-spaces.

\begin{defn}\label{def-flt-complex}
A filtered chain complex $(C,d, \flt)$ over $\F$ consists of 
\begin{enumerate}
	\item a $\mathbb{Z}$-graded $\F$-space $C=\bigoplus_{n\in \mathbb{Z}} C_n$,
	\item a differential map $d:C\rightarrow C$ satisfying $d^2=0$ and $d(C_n) \subset C_{n-1}$,
	\item an ascending $\mathbb{Z}$-filtration $\flt$ on each $C_n$ satisfying $d(\flt^p C_n)\subset \flt^p C_{n-1}$.
\end{enumerate}
We write $\flt^p C = \bigoplus_{n\in \mathbb{Z}} \flt^pC_n$. We say that 
\begin{itemize}
	\item $(C,\flt)$ is locally finite dimensional over $\F$ if $\dim_{\F} \flt^p C_n <\infty$ for each $p\in \Z$ and $n \in \Z$,
	\item $\flt$ is bounded below on $C_n$ if there is a $p\in\mathbb{Z}$ depending on $n$, such that $\flt^p C_n=0$.
\end{itemize}

Note that $(\flt^p C, d|_{\flt^p C})$ is a subcomplex of the chain complex $(C,d)$ for all $p \in \mathbb{Z}$. The persistent chain complex  $(PC, d_x)$ of $(C,d, \flt)$ \cite[Definition 3.1]{ComPH} is defined to be the direct sum 
\[
(PC, d_x):=\bigoplus_{p\in \mathbb{Z}} (\flt^p C, d|_{\flt^p C}).
\]
Let $x$ be a homogeneous indeterminate of degree $1$ acting on $PC$ as the natural inclusion map $i^p : \flt^p C \hookrightarrow \flt^{p+1} C$ for each $p \in\mathbb{Z}$. This makes $(PC, d_x)$ is a chain complex of graded $\Fx$-modules, where the homogeneous component of degree $p$ is $(\flt^p C, d|_{\flt^p C})$. The homology $PH:=H(PC, d_x)$ is a graded $\Fx$-module and called the persistent homology of $(C,d, \flt)$ \cite{ComPH} .
\end{defn}

Our arguments are based on Theorem \ref{Decomp}. Before stating the theorem, let us introduce the following notations.

\begin{defn} \label{def-bigrade}
Let $M$ be a graded $\Fx$ module. Then $M||n||\{s\}$ is a bigraded $\Fx$-module where
\begin{itemize}
	\item $||\cdot||$ indicates the homological grading,
	\item $\{\cdot\}$ indicates the polynomial degree shift, that is $\left( M||n||\{s\} \right)^k = M^{k-s}$.
\end{itemize}
\end{defn}
\begin{defn} \label{def-decomp-type}
Denoted by $\infZ = \mathbb{Z}_+ \cup \{+\infty\}$, we assume $n,s \in \mathbb{Z}$ and $m \in \infZ $.
Define two types of graded chain complex
\begin{align}
\U \ &= \   0 \to \fxc[n][s] \to 0   \label{eq:UF} \\
\U[m] \ &= \ 0 \to \fxc[n+1][s+m] \xrightarrow{x^m} \fxc[n][s] \to 0  \label{eq:UT}
\end{align}
where  $m = \infty$ in \eqref{eq:UF} and $0 < m < \infty$ in \eqref{eq:UT}.
\end{defn}

Our main results are the following.

\begin{thm} \label{Decomp}
Let $(PC, d_x)$ be the persistent chain complex of a filtered chain complex $(C,d, \flt)$ over $\F$ satisfying:
\begin{itemize}
    \item For each $n$, the filtration $\flt$ on $C_n$ is bounded below;
    \item $(C,d,\flt)$ is locally finite dimensional over $\F$.
    \end{itemize}
Then, up to chain homotopy and permutation of factors, $(PC,d_x)$ can be uniquely decomposed into a direct sum of graded chain complexes of types \eqref{eq:UF} and \eqref{eq:UT}.
More precisely, for each $n \in \mathbb{Z}$, there exist $K_n \in \infZ$ and a unique sequence $\{(s_n(i),m_n(i))\}_{i=1}^{K_n} \subseteq \mathbb{Z} \times \infZ $ satisfying:
\begin{itemize}
\item the set $\{i \ | \ s_n(i) = s \}$ is finite for all $s \in \Z$ and $n \in \Z$,
\item $s_n(i) \le s_n(i+1)$,
\item $m_n(i) \le m_n(i+1)$ if $s_n(i) = s_n(i+1)$,
\end{itemize} 
so that
\begin{equation}  \label{eq:DecPC}
PC \simeq \bigoplus_{n=-\infty}^\infty  \bigoplus_{i=1}^{K_n}  U_{n,s_n(i),m_n(i)} .
\end{equation}

Consequently, the persistent homology $PH=H(PC,d_x)$ can be uniquely decomposed into a direct sum of
\begin{equation} \label{eq:DecPH}
PH \cong \bigoplus_{n=-\infty}^\infty  \bigoplus_{i=1}^{K_n}  H(U_{n,s_n(i),m_n(i)}) . 
\end{equation}
Here, each factor in Decomposition \eqref{eq:DecPH} has the form $H(\U) \cong \fxc[n][s] $ or $H(\U[m]) \cong \Fx/(x^m)||n||\{s\} $ for $m < \infty$.
\end{thm}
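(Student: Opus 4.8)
The plan is to carry out the entire argument inside the category of $\Z$-graded $\Fx$-modules, using that $\Fx$ is a graded principal ideal domain whose only homogeneous prime ideal is $(x)$, and that the two hypotheses on $(C,d,\flt)$ say precisely that every graded $\Fx$-module in sight is bounded below and locally finite (finite-dimensional in each polynomial degree).

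First I would establish that $PC_n$ is a graded free $\Fx$-module for every $n$. Since $x$ acts on $PC_n=\bigoplus_p\flt^pC_n$ as the inclusions $\flt^pC_n\hookrightarrow\flt^{p+1}C_n$, multiplication by $x$ is injective and the dimensions $\dim_\F\flt^pC_n$ are nondecreasing in $p$ and eventually $0$ as $p\to-\infty$; a greedy choice of homogeneous elements — in each degree $p$, extend the already-chosen $x$-multiples to an $\F$-basis of $\flt^pC_n$ — produces a graded $\Fx$-basis. The same elementary argument applies to the graded submodules $Z_n:=\ker\!\big(d_x\colon PC_n\to PC_{n-1}\big)$ and $B_n:=\im\!\big(d_x\colon PC_{n+1}\to PC_n\big)$, since multiplication by $x$ is injective on them as well. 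Because $PC_n/Z_n\cong B_{n-1}$ is free, the short exact sequence $0\to Z_n\to PC_n\to B_{n-1}\to0$ of graded modules splits; choosing splittings identifies $(PC,d_x)$, as a chain complex, with the direct sum $\bigoplus_{n\in\Z}T^{(n)}$, where $T^{(n)}$ has $B_n$ in homological degree $n+1$, $Z_n$ in homological degree $n$, and differential the inclusion $B_n\hookrightarrow Z_n$. Each $T^{(n)}$ is a length-one free resolution of $PH_n=Z_n/B_n$.

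The next, central ingredient is the structure theorem for a graded $\Fx$-module $M$ that is bounded below and locally finite: such an $M$ is a direct sum of graded cyclic modules, each of the form $\Fx\{s\}$ (the torsion-free ones) or $\Fx/(x^m)\{s\}$ with $0<m<\infty$ (the torsion ones), with only finitely many summands of each polynomial degree, and the multiset of summands is uniquely determined by $M$. I would prove this by splitting off the torsion submodule $T(M)$ — the quotient $M/T(M)$ is torsion-free, bounded below and locally finite, hence free by the previous step, so $0\to T(M)\to M\to M/T(M)\to0$ splits — and then decomposing the $x$-primary torsion module $T(M)$ by the classical dévissage, with local finiteness playing the role that finite generation plays in the textbook case; uniqueness comes from counting dimensions of the graded pieces of $\ker(x^k)$ and its $x$-multiples. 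Applying this to $PH_n$ and then taking, summand by summand, the minimal graded free resolution — which is $\U$ for a free summand $\fxc[n][s]$ and $\U[m]$ for a torsion summand $\Fx/(x^m)||n||\{s\}$ — expresses a free resolution of $PH_n$ as a direct sum of the complexes $U_{n,\cdot,\cdot}$; the comparison theorem for projective resolutions then gives $T^{(n)}\simeq\bigoplus_i U_{n,s_n(i),m_n(i)}$, and summing over $n$ produces Decomposition \eqref{eq:DecPC}. The three ordering conditions merely single out a canonical enumeration of these multisets.

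Uniqueness in \eqref{eq:DecPC} follows formally: applying homology to any decomposition of that shape — using that homology commutes with direct sums and that $x^m$ is injective when $m<\infty$, which also yields the asserted values $H(\U)\cong\fxc[n][s]$ and $H(\U[m])\cong\Fx/(x^m)||n||\{s\}$, hence \eqref{eq:DecPH} — recovers for each $n$ a decomposition of $PH_n$ into graded cyclic modules, whose multiset is pinned down by the uniqueness part of the structure theorem, so the multiset $\{(s_n(i),m_n(i))\}_i$ depends only on $(C,d,\flt)$. I expect the main obstacle to be exactly this structure theory in the \emph{non-finitely-generated} regime: when $C_n$ is infinite-dimensional, $PC_n$ and $PH_n$ need not be finitely generated over $\Fx$, so one must re-run the classical decomposition and uniqueness arguments with boundedness below and local finiteness as the only substitutes for finiteness. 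Once that lemma is in place, the remaining steps — splitting the complex, comparing resolutions, and computing the homology of each $U_{n,s,m}$ — are routine.
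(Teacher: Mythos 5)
Your argument is correct but runs in the opposite direction from the paper's. The paper decomposes the \emph{chain complex} $(PC,d_x)$ first: after identifying $PC$ with a free graded $\Fx$-module (Proposition \ref{PC-iso}), it runs an explicit, graded ``Smith normal form'' procedure (Lemma \ref{LemDec}), building bases $V_n\sqcup W_n\sqcup\overline W_n$ term by term through an upward and a downward induction (Proposition \ref{ThmDec}), which splits $PC$ into $U_{n,s,m}$'s directly; the $PH$ decomposition and its uniqueness (Lemma \ref{PHD-Uni}) are then read off by taking homology and counting graded dimensions of $x^lM/x^{l+1}M$. You instead decompose the \emph{homology} first — split $PC$ along $Z_n$ and $B_n$ into the two-term pieces $T^{(n)}$, invoke a structure theorem for bounded-below pointwise-finite graded $\Fx$-modules to write $PH_n$ as a sum of graded cyclics, and then lift via minimal free resolutions and the comparison theorem. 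This is a clean, modular route, and your observations along the way (the greedy basis for free/torsion-free bounded-below locally-finite modules; that boundedness below forbids Pr\"ufer-type obstructions to d\'evissage) are exactly the points one must check. The trade-off is that the load-bearing step you flag yourself — the structure theorem in the non-finitely-generated, pointwise-finite regime — is essentially equivalent in content to the paper's Lemma \ref{LemDec} applied to the two-term resolution $B_n\hookrightarrow Z_n$, so the hard combinatorial work is relocated rather than avoided; the paper chooses to do it once at the level of the free complex, while you do it at the level of the (generally non-free) homology module and pay for it with the resolution/comparison machinery. One small thing worth noting explicitly when you write this up: since $m\in\infZ$ excludes $m=0$, the minimal resolutions never produce contractible $U_{n,s,0}$ factors, which is why the homotopy type of $T^{(n)}$ is pinned down by $PH_n$ alone and your uniqueness argument closes.
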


The proof of Theorem \ref{Decomp} is included in Section \ref{sec-PH}.
Since the spectral sequences corresponding to the persistent chain complexes $\U$ and $\U[m]$ are easy to compute, Theorem \ref{Decomp} leads to a direct sum decomposition of spectral sequences over $\F$ into simple factors.  Denote by $E^{(r)}$ the $r$-th page of the spectral sequence of $(C, d,\flt)$ and by $E_{n,s}^{(r)}$ the homogeneous component of $E^{(r)}$ with homological grading $n$ and $\Fx$-module grading $s$.

\begin{thm}\label{SS-Decomp}
Let $(PC, d_x)$ be the persistent chain complex of a filtered chain complex $(C,d, \flt)$ over $\F$ satisfying:
\begin{itemize}
    \item For each $n$, the filtration $\flt$ on $C_n$ is bounded below;
    \item $(C,d,\flt)$ is locally finite dimensional over $\F$.
    \end{itemize}
Decomposition \eqref{eq:DecPH} of the persistent homology $PH=H(PC,d_x)$ in Theorem \ref{Decomp} induces a direct sum decomposition of the spectral sequence $\{E^{(r)}\}$ of $(C,d,  \flt )$. 
More precisely, for $r \in \infZ$ and the same sequence $\{(s_n(i),m_n(i))\}_{i=1}^{K_n} \subseteq \mathbb{Z} \times \infZ $ given in Theorem \ref{Decomp}, we have that,
$$E^{(r)} \cong \bigoplus_{n=-\infty}^\infty  \bigoplus_{i=1}^{K_n}  E^{(r)}(U_{n,s_n(i),m_n(i)}) ,$$
where:
\begin{itemize}
	\item $E^{(r)}(\U) \cong  \F\|n\|\{s\}$  $\forall r \ge 1$, and consequently $E^{(\infty)}(\U) \cong  \F\|n\|\{s\}$.
	\item If $m < \infty$, then 
	\[
	E^{(r)}(\U[m]) \cong  \begin{cases}
\fc[n+1][s+m] \oplus \fc[n][s] & \text{for } 1 \le r \le m, \\
0 & \text{for } r > m.
\end{cases}
\]
And, consequently $E^{(\infty)}(\U[m]) \cong  0$.
\end{itemize}
\end{thm}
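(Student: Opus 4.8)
The plan is to obtain the decomposition of $\{E^{(r)}\}$ directly from Decomposition \eqref{eq:DecPH} of $PH$, relying on two structural facts about the spectral sequence of a filtered chain complex over a field. The first is that $\{E^{(r)}\}$ is the spectral sequence of the exact couple built on $D := \bigoplus_p H(\flt^p C) = H(PC,d_x) = PH$, in which the map $i$ is multiplication by $x$ (the filtration inclusions $i^p\colon \flt^p C \hookrightarrow \flt^{p+1}C$) and $E^{(1)} = \bigoplus_p H(\flt^p C / \flt^{p-1}C)$, the remaining structure maps being those in the long exact homology sequences of $0 \to \flt^{p-1}C \to \flt^p C \to \flt^p C/\flt^{p-1}C \to 0$. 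Because $\F$ is a field, these long exact sequences split, giving
\[ E^{(1)}_{q,p} \cong \mathrm{coker}\bigl(x\colon H_q(\flt^{p-1}C) \to H_q(\flt^{p}C)\bigr)\ \oplus\ \ker\bigl(x\colon H_{q-1}(\flt^{p-1}C) \to H_{q-1}(\flt^{p}C)\bigr) ; \]
and, more, the entire exact couple — hence the whole spectral sequence, including $E^{(\infty)}$ — is determined up to isomorphism by the bigraded $\Fx$-module $PH$, by a construction that commutes with direct sums. (An equivalent route: $\{E^{(r)}\}_{r \ge 1}$ is an invariant of $(PC,d_x)$ up to $\Fx$-linear chain homotopy equivalence, since such an equivalence is precisely a filtration-preserving chain homotopy equivalence of the underlying filtered complexes, and a filtration-preserving homotopy descends to every $\flt^p C/\flt^{p-1}C$ and so induces the identity already on $E^{(1)}$, hence on all later pages; one then feeds in Decomposition \eqref{eq:DecPC} rather than \eqref{eq:DecPH}.)

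With this in hand, Decomposition \eqref{eq:DecPH}, namely $PH \cong \bigoplus_n \bigoplus_i H(U_{n,s_n(i),m_n(i)})$, yields $E^{(r)} \cong \bigoplus_n \bigoplus_i E^{(r)}(U_{n,s_n(i),m_n(i)})$ for all $r \in \infZ$, where $E^{(r)}(U_{n,s,m})$ is the spectral sequence of the exact couple on the $\Fx$-module $H(U_{n,s,m})$ — equivalently, the spectral sequence of the evident small filtered chain complex whose persistent chain complex is $U_{n,s,m}$: a copy of $\F$ in homological degree $n$ entering the filtration at level $s$ when $m = \infty$, and copies of $\F$ in homological degrees $n{+}1$ and $n$ entering at levels $s{+}m$ and $s$ and joined by the identity when $m < \infty$. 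To legitimise the possibly infinite direct sum and to carry it through to $r = \infty$, I would use the finiteness hypotheses of Theorem \ref{Decomp} — that $\{i \mid s_n(i) = s\}$ is finite and that $\flt$ is bounded below — to see that each bidegree of each page receives nonzero contributions from only finitely many summands and that each summand's spectral sequence stabilises at a finite page, so the sum is locally finite and $E^{(\infty)}$ is formed termwise.

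It remains to compute the spectral sequences of the two model factors, a routine unwinding of the description above. For $\U$ one has $H(\U) \cong \Fx\|n\|\{s\}$, free, so $x$ acts injectively; the $\ker$ terms vanish and $\mathrm{coker}\bigl(x\colon H_n(\flt^{p-1}C) \to H_n(\flt^p C)\bigr)$ is $\F$ at $p = s$ and $0$ otherwise, so $E^{(1)}(\U) \cong \F\|n\|\{s\}$. Being concentrated in the single homological degree $n$, all differentials $d^{(r)}$ vanish, whence $E^{(r)}(\U) \cong \F\|n\|\{s\}$ for all $r \ge 1$ and $E^{(\infty)}(\U) \cong \F\|n\|\{s\}$. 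For $\U[m]$ with $0 < m < \infty$ one has $H(\U[m]) \cong \Fx/(x^m)\|n\|\{s\}$, concentrated in homological degree $n$; the formula produces a $\mathrm{coker}$ contribution $\F$ in bidegree $(n, s)$ and a $\ker$ contribution $\F$ in bidegree $(n{+}1, s{+}m)$, so $E^{(1)}(\U[m]) \cong \F\|n{+}1\|\{s{+}m\} \oplus \F\|n\|\{s\}$. Since $(n,s)$ and $(n{+}1, s{+}m)$ are the only nonzero bidegrees, no $d^{(r)}$ can be nonzero before page $m$, where $d^{(m)}\colon E^{(m)}_{n+1,\,s+m} \to E^{(m)}_{n,\,s}$ first becomes available; this map is nonzero — one checks it from the $m$-fold derived couple, or notes that the model filtered complex $\F \xrightarrow{\sim} \F$ is acyclic, so $E^{(\infty)}$ must vanish — hence an isomorphism of one-dimensional spaces. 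Therefore $E^{(r)}(\U[m]) = E^{(1)}(\U[m])$ for $1 \le r \le m$, $E^{(r)}(\U[m]) = 0$ for $r > m$, and $E^{(\infty)}(\U[m]) = 0$.

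The step I expect to be the main obstacle is the structural input of the first paragraph — making precise, over a field, that the spectral sequence is a functor of the bigraded $\Fx$-module $PH$ commuting with direct sums (or, on the alternative route, that a filtration-preserving chain homotopy gives agreement of the induced maps on $E^{(r)}$ already from $r = 1$) — together with the bookkeeping that carries the infinite direct sum through to $E^{(\infty)}$. Once that is settled the two model computations are elementary.
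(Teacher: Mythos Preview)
Your proposal is correct and follows essentially the same approach as the paper: feed the decomposition of Theorem~\ref{Decomp} into the exact-couple description of the spectral sequence (the paper's Definition~\ref{def-calH}), then compute the pages of each factor $U_{n,s,m}$ (the paper's Lemma~\ref{SS}) and handle $E^{(\infty)}$ via local collapse (Corollary~\ref{SS-LocCol}). The one minor twist is that your primary route reconstructs the exact couple from the $\Fx$-module $PH$ alone using that exact couples over a field are determined by $(A,f)$, whereas the paper defines $\cH^{(1)}$ at the chain level and so implicitly uses the chain-homotopy decomposition \eqref{eq:DecPC}; you already flag this alternative in your parenthetical, and either route suffices.
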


Comparing the decompositions in Theorems \ref{Decomp} and \ref{SS-Decomp}, one can deduce that the spectral sequence and the persistent homology determine each other under fairly general assumptions.  
We formulate this as the following.

\begin{defn} \label{def-loc-col}
The spectral sequence $\{E^{(r)}\}$ \textit{collapses locally} to $E^{(\infty)}$ if $\{E_{n,s}^{(r)}\}$ collapses to $E_{n,s}^{(\infty)}$ for every $n,s \in \Z$.
That is, there exists $r_{n,s} \in \Z_+ $ such that $E_{n,s}^{(r)} \cong E_{n,s}^{(\infty)}$ if $r > r_{n,s}$.
\end{defn}

\begin{defn} \label{def-nu}
Let $K_n \in \infZ $ and $\{(s_n(i),m_n(i))\}_{i=1}^{K_n} \subseteq \Z \times \infZ $ be the unique sequence described in Theorem \ref{Decomp}. 
For $n,s \in \Z$ and $m \in \infZ$, define the multiplicity of $H(\U[m])$ in Decomposition \eqref{eq:DecPH} as
$$\nu_{n,s,m} := \mathlarger{|} \{ i \ | \ 1 \le i \le K_n ,\ s_n(i)=s ,\ m_n(i)=m \} \mathlarger{|}.$$
In particular, the multiplicity of $H(\U)$ is defined as
$$\nu_{n,s,\infty} := \mathlarger{|} \{ i \ | \ 1 \le i \le K_n ,\ s_n(i)=s ,\ m_n(i)=\infty \} \mathlarger{|}.$$
\end{defn}

\begin{thm} \label{SS=PH}
Let $(PC, d_x)$ and $\{E^{(r)}\}$ be the persistent chain complex and spectral sequences of a filtered chain complex $(C,d, \flt)$ over $\F$ satisfying:
\begin{itemize}
    \item For each $n$, the filtration $\flt$ on $C_n$ is bounded below;
    \item $(C,d,\flt)$ is locally finite dimensional over $\F$.
    \end{itemize}
    Then we have
\begin{enumerate}
	\item  $\dim_\F E_{n,s}^{(r)} = \nu_{n,s,\infty}+ \sum_{m \ge r} (\nu_{n,s,m} + \nu_{n-1,s-m,m})$, which shows that the persistent homology determines the spectral sequence.
	\item  The spectral sequence $\{E^{(r)}\}$ collapses locally to $E^{(\infty)}$, and
	\begin{itemize}
   \item[(a)] $\nu_{n,s,\infty}=\dim_\F E_{n,s}^{(\infty)}$,
   \item[(b)] $\nu_{n,s,m}=\dim_\F E_{n,s}^{(m)} - \dim_\F E_{n,s}^{(m+1)} - \nu_{n-1,s-m,m}$.
  \end{itemize}
	If the filtration $\flt$ is also bounded below on the entire $C$, then (a) and (b) imply that the spectral sequence determines the persistent homology, because $\nu_{n,s,m}=0$ for $s\ll 0$ since $\flt$ is bounded below uniformly.
\end{enumerate}

\end{thm}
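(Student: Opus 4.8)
The plan is to derive Theorem \ref{SS=PH} as a formal consequence of Theorems \ref{Decomp} and \ref{SS-Decomp}: almost all of the content already sits in those two statements, so what remains is a bidegree-by-bidegree count together with one finiteness remark. For part (1) I would fix $n,s\in\Z$ and restrict the decomposition of the spectral sequence in Theorem \ref{SS-Decomp} to the homogeneous component of homological grading $n$ and $\Fx$-grading $s$. One then reads off from Theorem \ref{SS-Decomp} exactly which indecomposable summands $E^{(r)}(U_{n',s',m'})$ meet this bidegree and how large their contribution is: a summand $U_{n,s,\infty}$ contributes a one-dimensional space $\F\|n\|\{s\}$ on every page $r\ge 1$ and on the $\infty$-page; a summand $U_{n,s,m}$ with $m<\infty$ contributes a one-dimensional space for $1\le r\le m$ and nothing for $r>m$; a summand $U_{n-1,s-m,m}$ with $m<\infty$ contributes the one-dimensional piece $\fc[n][s]$ for $1\le r\le m$ and nothing for $r>m$; and no other summand has a nonzero part in bidegree $(n,s)$. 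Summing these contributions with the multiplicities of Definition \ref{def-nu} gives, for every integer $r\ge 1$,
\[
\dim_\F E_{n,s}^{(r)}=\nu_{n,s,\infty}+\sum_{m\ge r}\bigl(\nu_{n,s,m}+\nu_{n-1,s-m,m}\bigr),
\]
where the sum runs over integers $m\ge r$. Since the sequence $\{(s_n(i),m_n(i))\}$, hence every $\nu$, is determined by $PH$ through the uniqueness in Theorem \ref{Decomp}, and since the spectral sequence is — up to isomorphism — the direct sum of the explicit pieces $E^{(r)}(U_{n,s_n(i),m_n(i)})$, this establishes the first assertion.

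For part (2) I would first note that local finite dimensionality makes every $E_{n,s}^{(r)}$ finite dimensional: for $r=1$, $E_{n,s}^{(1)}=H_n(\flt^s C/\flt^{s-1}C)$ is a subquotient of the finite-dimensional space $\flt^s C_n/\flt^{s-1}C_n$, and each later page is a subquotient of its predecessor. Inserting $r=1$ into the formula above then forces all but finitely many of the numbers $\nu_{n,s,m}$, $\nu_{n-1,s-m,m}$ (over $m\in\Z_+$) to vanish, so for $r$ large enough the sum vanishes and $\dim_\F E_{n,s}^{(r)}=\nu_{n,s,\infty}$; together with $\dim_\F E_{n,s}^{(\infty)}=\nu_{n,s,\infty}$, read off from $E^{(\infty)}(\U)\cong\F\|n\|\{s\}$ and $E^{(\infty)}(\U[m])\cong 0$ in Theorem \ref{SS-Decomp}, this gives the local collapse of $\{E^{(r)}\}$ and statement (a). For statement (b), I would subtract the formula for $\dim_\F E_{n,s}^{(m+1)}$ from the one for $\dim_\F E_{n,s}^{(m)}$: all terms of index larger than $m$ cancel, leaving $\nu_{n,s,m}+\nu_{n-1,s-m,m}$, which rearranges to the stated identity.

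Finally, suppose $\flt^{p_0}C=0$ for some $p_0$. Then in each homological degree the graded $\Fx$-module $PH$ vanishes in polynomial degrees $\le p_0$, while $H(\U)=\fxc[n][s]$ and $H(\U[m])=\Fx/(x^m)\|n\|\{s\}$ are nonzero in polynomial degree $s$; comparing with Decomposition \eqref{eq:DecPH} forces $s_n(i)>p_0$ for all $i$, i.e.\ $\nu_{n,s,m}=0$ whenever $s\le p_0$. I would then iterate the recursion in (b): $\nu_{n,s,m}$ is expressed through page dimensions and $\nu_{n-1,s-m,m}$, the latter through $\nu_{n-2,s-2m,m}$, and so on; as $m\ge 1$, the polynomial index strictly decreases at each step, so after finitely many steps it drops to $\le p_0$ and the trailing $\nu$-term is $0$. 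Hence every multiplicity $\nu_{n,s,m}$ (and $\nu_{n,s,\infty}$ by (a)) is a finite $\Z$-linear combination of dimensions of pages of $\{E^{(r)}\}$, so the spectral sequence determines all the multiplicities and therefore, by the uniqueness in Theorem \ref{Decomp}, determines $PH$ up to isomorphism. I expect the only delicate points to be getting the bidegree bookkeeping in part (1) exactly right and checking that this recursion genuinely terminates — the latter being precisely where the global boundedness of $\flt$ is used; everything else follows directly from Theorems \ref{Decomp} and \ref{SS-Decomp}.
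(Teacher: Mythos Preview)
Your proposal is correct and follows essentially the same approach as the paper: the dimension formula in part (1) is exactly the paper's Proposition \ref{E-ns}, part (b) is obtained in both by subtracting consecutive pages, and the recursion argument for the globally bounded case is identical. The only cosmetic difference is that you establish local collapse and (a) simultaneously from the finiteness of $\dim_\F E_{n,s}^{(1)}$, whereas the paper first invokes Corollary \ref{SS-LocCol} and then deduces (a).
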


\begin{cor}\cite[Corollary 2]{SSPH} \cite[Spectral Sequence Theorem]{ComTop}   \label{cor-SST}
Let $(PC, d_x)$ be the persistent chain complex of a filtered chain complex $(C,d, \flt)$ over $\F$ satisfying:
\begin{itemize}
    \item For each $n$, the filtration $\flt$ on $C_n$ is bounded below;
    \item $(C,d,\flt)$ is locally finite dimensional over $\F$.
    \end{itemize}
Then for any $r>0$, we have 

\begin{equation} \label{eq:dimEnr}
dim_\F E_n^{(r)}= \sum_s \left( \nu_{n,s,\infty} + \sum_{m \ge r} (\nu_{n,s,m} + \nu_{n-1,s,m}) \right).
\end{equation}

Equation \eqref{eq:dimEnr} is equivalent to the following equation from \cite[Corollary 2]{SSPH}:

\begin{equation} \label{eq:dimEnr-Basu}
dim_\F E_n^{(r)}(\flt)=  b_n(\flt) + \sum_{t-s \ge r} (\mu_n^{s,t}(\flt) + \mu_{n-1}^{s,t}(\flt))
\end{equation}
where $b_n(\flt)$ is the persistent Betti number and $\mu_n^{s,t}(\flt)$ is the persistent multiplicity that are both defined in \cite[VII.1]{ComTop}.

\end{cor}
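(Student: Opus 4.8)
The plan is to read off \eqref{eq:dimEnr} from Theorem \ref{SS=PH}(1) by summing over the $\Fx$-module grading, and then to translate between the multiplicities $\nu_{n,s,m}$ and the persistent Betti numbers and persistent multiplicities of \cite[VII.1]{ComTop} in order to recover \eqref{eq:dimEnr-Basu}.

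For the first part, I would use that the $r$-th page decomposes along the $\Fx$-module grading, $E_n^{(r)} = \bigoplus_{s \in \Z} E_{n,s}^{(r)}$, so that $\dim_\F E_n^{(r)} = \sum_{s \in \Z} \dim_\F E_{n,s}^{(r)}$ as an element of $\infZ$. Substituting $\dim_\F E_{n,s}^{(r)} = \nu_{n,s,\infty} + \sum_{m \ge r}(\nu_{n,s,m} + \nu_{n-1,s-m,m})$ from Theorem \ref{SS=PH}(1) and splitting the triple sum, the only point needing attention is the term $\sum_{s} \sum_{m \ge r} \nu_{n-1,s-m,m}$: since every $\nu$ is a nonnegative integer, the two summations may be interchanged, and for each fixed $m$ the substitution $s \mapsto s+m$ turns $\sum_s \nu_{n-1,s-m,m}$ into $\sum_s \nu_{n-1,s,m}$. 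Reassembling the pieces yields \eqref{eq:dimEnr} exactly; note that both sides may equal $+\infty$ when $\flt$ is not bounded below on all of $C$, and the identity still holds in $\infZ$.

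For the second part, I would invoke Decomposition \eqref{eq:DecPH}: in homological degree $n$, the persistent homology is a direct sum of free modules $\Fx||n||\{s\}$, one for each index $i$ with $m_n(i) = \infty$, together with torsion modules $\Fx/(x^{m})||n||\{s\}$, one for each index $i$ with $s_n(i) = s$ and $m_n(i) = m < \infty$. A torsion summand $\Fx/(x^{m})||n||\{s\}$ is the interval module supported in polynomial degrees $s, \ldots, s+m-1$, hence represents a bar born at filtration level $s$ and dying at level $t = s+m$; comparing with \cite[VII.1]{ComTop} gives the dictionary $\mu_n^{s,t}(\flt) = \nu_{n,s,t-s}$, while the free summands contribute the $r$-independent count $\sum_s \nu_{n,s,\infty}$, which is exactly the persistent Betti number $b_n(\flt)$. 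Substituting this dictionary into \eqref{eq:dimEnr-Basu} and reindexing the range $t - s \ge r$ by $m = t - s$ turns \eqref{eq:dimEnr-Basu} termwise into \eqref{eq:dimEnr}, and the substitution is reversible, so the two equations are equivalent.

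All of the homological content already resides in Theorem \ref{SS=PH}, so what remains is bookkeeping; the step most prone to error is reconciling conventions --- here the homological grading is the total degree and the $\Fx$-grading is the filtration level, whereas \cite{ComTop} records birth--death pairs $(s,t)$ --- so one must check carefully that the bar length equals $m = t - s$ and that the reindexed, possibly infinite, double sums are manipulated legitimately, which holds because every summand is a nonnegative integer.
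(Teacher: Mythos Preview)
Your proposal is correct and matches the paper's approach: the paper derives the corollary directly from Proposition~\ref{E-ns} (which is the formula you cite as Theorem~\ref{SS=PH}(1)) together with Corollary~\ref{Nu=Mul} (which is exactly the dictionary $\mu_n^{s,t}=\nu_{n,s,t-s}$ and the Betti-number identification you spell out). Your extra care with the reindexing $s\mapsto s+m$ in the $\nu_{n-1,s-m,m}$ term and the remark on nonnegative summands are precisely the bookkeeping the paper leaves implicit.
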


Corollary \ref{cor-SST} follows directly from Corollary \ref{Nu=Mul} and Proposition \ref{E-ns}.

\section{Decomposition of  Persistent Homology}\label{sec-PH}

\subsection{Persistent chain complexes}

Let $(C,d,\flt)$ be a filtered chain complex over $\F$ defined in Definition \ref{def-flt-complex} and $(PC,d_x)$ be the correspond persistent chain complex.
For each $p,n \in \Z$, fix an $\F$-subspace $G^p_n$ of $\flt^p C_n$ such that $\flt^p C_n = \flt^{p-1} C_n \oplus G_n^p$. Then 
$$\flt^p C_n = \bigoplus_{q=- \infty}^p G_n^q.$$
For $q \le p$, denote by $\pi^{p,q}_n: \flt^p C_n \to G_n^q$ the natural projection.
Restricting the differential map $d_n :  C_n \to  C_{n-1}$ to $G^p_n$, we have $d_n^p : G_n^p \to \flt^p C_{n-1}= \bigoplus_{q=- \infty}^p G_{n-1}^q$. 
Define the map $d_n^{p,q} : G_n^p \to G_{n-1}^q$ by $d_n^{p,q} = \pi_{n-1}^{p,q} \circ d_n^p$ for any $p \ge q$.
Thus $d_n^p = \sum_{q \le p} d_n^{p,q}$.

\begin{defn} \label{def-pc}
For each $n \in \Z$, we define a graded $\Fx$-module as ${\PC_n = \Fx \tsr_\F C_n }$ where $\PC^p_n := \bigoplus_{k \ge 0} x^k G^{p-k}_n$ is the homogeneous component of $\PC_n$ of degree $p$.
$\PC_n$ is a graded $\Fx$-module for each $n$. Define an $\Fx$-module map $(\d_x)_n : \PC_n \to \PC_{n-1}$ such that $(\d_x)_n^p := (\d_x)|_{G_n^p} : G^p_n \to \PC^p_{n-1}$ is given by $(\d_x)_n^p=\sum_{q \le p} x^{p-q}d_n^{p,q}$.
\end{defn}

Note that $(\d_x)_n$ is a homogeneous map preserving the module grading since the degree of $d_n^{p,q}$ is $q-p$.

\begin{prop} \label{PC-iso}
$(\PC,\d_x)$ is isomorphic to $(PC, d_x)$. In particular, $(PC, d_x)$ is a free $\Fx $-module.
\end{prop}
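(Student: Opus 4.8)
The plan is to exhibit an explicit graded $\Fx$-module isomorphism $\Phi \colon \PC \to PC$ and check that it intertwines the differentials. Recall that $PC_n = \bigoplus_{p \in \Z} \flt^p C_n$, with $x$ acting in degree $p$ by the inclusion $i^p \colon \flt^p C_n \hookrightarrow \flt^{p+1} C_n$, while $\PC_n = \Fx \tsr_\F C_n$ has degree-$p$ component $\PC_n^p = \bigoplus_{k \ge 0} x^k G_n^{p-k} = \bigoplus_{q \le p} x^{p-q} G_n^q$. Using the decomposition $\flt^p C_n = \bigoplus_{q \le p} G_n^q$ recorded above, the target splits in degree $p$ as $\flt^p C_n = \bigoplus_{q \le p} G_n^q$, and the evident ``forget the power of $x$'' identification of these summands is the candidate map.

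Concretely, I would define $\Phi$ on $\Fx$-generators by sending, for $v \in G_n^q$ and $k \ge 0$, the element $x^k \tsr v \in \PC_n^{q+k}$ to the image of $v$ under the iterated inclusion $G_n^q \subseteq \flt^q C_n \hookrightarrow \flt^{q+k} C_n$, regarded as lying in the degree-$(q+k)$ component of $PC_n$; equivalently $\Phi(x^k \tsr v) = x^k \cdot \iota(v)$, where $\iota$ embeds $G_n^q$ into the degree-$q$ component of $PC_n$. Since $\PC_n = \Fx \tsr_\F C_n$ is free on $C_n = \bigoplus_q G_n^q$ over $\Fx$, this rule extends uniquely to an $\Fx$-linear map, homogeneous of degree $0$ by construction. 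In degree $p$ it is exactly the linear isomorphism $\bigoplus_{q \le p} x^{p-q} G_n^q \to \bigoplus_{q \le p} G_n^q = \flt^p C_n$ that drops powers of $x$; hence $\Phi$ is an isomorphism of graded $\Fx$-modules.

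It remains to verify $\Phi \circ \d_x = d_x \circ \Phi$. Both composites are $\Fx$-linear (the maps $\d_x$ and $d_x$ are $\Fx$-module maps and $\Phi$ is $\Fx$-linear), so it suffices to check the identity on generators $1 \tsr v$ with $v \in G_n^p$. On the one side $\Phi(v) = \iota(v)$ lies in degree $p$ of $PC_n$ and $d_x(\iota(v)) = d_n(v) \in \flt^p C_{n-1}$ in degree $p$. On the other side $(\d_x)_n^p(v) = \sum_{q \le p} x^{p-q} d_n^{p,q}(v)$ with $d_n^{p,q}(v) \in G_{n-1}^q$, so $\Phi\bigl((\d_x)_n^p(v)\bigr) = \sum_{q \le p} d_n^{p,q}(v)$ viewed in degree $p$ of $PC_{n-1}$; since $d_n^p = \sum_{q \le p} d_n^{p,q}$ and the decomposition of $d_n(v) \in \flt^p C_{n-1} = \bigoplus_{q \le p} G_{n-1}^q$ is precisely $\sum_{q \le p} d_n^{p,q}(v)$, this equals $d_n(v)$. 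Thus $\Phi$ is an isomorphism of chain complexes of graded $\Fx$-modules, and freeness of $(PC, d_x)$ over $\Fx$ follows at once, since $\Fx \tsr_\F C_n$ is free with basis any $\F$-basis of $C_n$.

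There is no genuine obstacle here: the work is entirely bookkeeping with the bigrading and the legitimate reduction to $\Fx$-generators. The only point worth flagging is that the identification $\flt^p C_n = \bigoplus_{q \le p} G_n^q$ underlying the whole argument relies on $\flt$ being bounded below (equivalently $\bigcap_p \flt^p C_n = 0$); without it $PC_n$ need not even be free over $\Fx$ — for a constant filtration on a one-dimensional $C_n$ one gets $PC_n \cong \Fx[x^{-1}]$ — so that hypothesis cannot be omitted from the ``in particular'' clause.
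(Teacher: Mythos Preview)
Your argument is correct and is essentially the same as the paper's: you construct the inverse of the map $\varphi$ the paper builds, exploiting the freeness of $\PC_n = \Fx \tsr_\F C_n$ to define $\Phi$ on generators and reduce the chain-map check to elements $1\tsr v$ with $v\in G_n^p$. Your closing remark that the identity $\flt^p C_n = \bigoplus_{q\le p} G_n^q$ (hence the freeness conclusion) tacitly uses bounded-belowness of $\flt$ is a valid observation the paper leaves implicit.
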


\begin{proof}

Recall that $\flt^p C_n = \bigoplus_{q=- \infty}^p G^p_n$ and $(PC, d_x)=\bigoplus_{p\in \mathbb{Z}} (\flt^p C, d|_{\flt^p C})$.
We have $PC_n = \bigoplus_{p \in \Z} \flt^p C_n =  \bigoplus_{p \in \Z} \left( \bigoplus_{q=- \infty}^p G^q_n \right)$. 
For any $n \in \Z$ and any $u \in \flt^p C_n \subset PC_n $, there is a decomposition of $u$ given by $u = \sum_{q \le p} g_n^{q,p}$ where each $g_n^{q,p} \in G_n^q \subset \flt^p C_n$. 
Define $\varphi_n: PC_n \to \PC_n$ by 
$$\varphi_n(u) = \varphi_n \left( \sum_{q \le p} g_n^{q,p} \right) = \sum_{q \le p} x^{p-q} g_n^q$$ 
where $g_n^q = g_n^{q,p}$ when viewed as an element of $ G_n^q \subset \PC_n$.
This is an $\F$-linear isomorphism with $\varphi_n^{-1} \left(  \sum_{q \le p} x^{p-q} g_n^q \right) =  \sum_{q \le p} g_n^{q,p}$.
Consider $i^p: \flt^p C \to \flt^{p+1} C$.
$g_n^{q,p} \in G_n^q \subset \flt^p C_n$ and $g_n^{q,p+1} \in G_n^q \subset \flt^{p+1} C_n$ are the same element as  $ g_n^p \in G^p_n \subset \PC_n$. 
Then $i^p ( g_n^{q,p}) =  g_n^{q,p+1}$ and, for $u \in \flt^pC_n$,
$$\begin{aligned}
\varphi_n (xu) & = \varphi(i^p(u)) =   \varphi_n \left(   i^p \left(\sum_{q \le p} g_n^{q,p} \right) \right) = \varphi_n \left(   \sum_{q \le p} g_n^{q,p+1} \right) \\
& =  \sum_{q \le p} x^{(p+1)-q} g_n^q =  \sum_{q \le p} x^{p-q+1} g_n^q \\
& = x \cdot  \sum_{q \le p} x^{p-q} g_n^q = x \cdot \varphi_n(u).
\end{aligned}$$
To verify $\varphi$ is a chain map, we have
$$\begin{aligned}
\d_x \circ \varphi_n (u) &= \d_x \left(  \sum_{q \le p} x^{p-q} g_n^q \right) =     \sum_{q \le p} x^{p-q} \cdot (\d_x)_n^q ( g_n^q ) \\ 
& =  \sum_{q \le p} x^{p-q} \sum_{k \le q} x^{q-k} d_n^{q,k}(g^p_n) =  \sum_{q \le p}  \sum_{k \le q} x^{(p-q)+(q-k)} d_n^{q,k}(g^p_n) \\
& =  \sum_{q \le p}  \sum_{k \le q} x^{p-k} d_n^{q,k}(g^p_n).
\end{aligned}$$
Note that $d^{q,k}_n(g_n^{q,p}) \in G_{n-1}^k \subset \flt^p C_{n-1}$. 
We also have
$$\begin{aligned}
\varphi_{n-1} \circ d_x (u) & = \varphi_{n-1} \left(  d|_{\flt^p C_n} \left( \sum_{q \le p} g_n^{q,p}  \right) \right)  = \varphi_{n-1} \left(  \sum_{q \le p} d^q_n(g_n^{q,p})  \right) \\
& =\varphi_{n-1} \left(  \sum_{q \le p} \sum_{k \le q}  d^{q,k}_n(g_n^{q,p})  \right) =  \sum_{q \le p} \sum_{k \le q} \varphi_{n-1} \left( d^{q,k}_n(g_n^{q,p})  \right) \\
& =  \sum_{q \le p}  \sum_{k \le q} x^{p-k} d_n^{q,k}(g^p_n).
\end{aligned}$$
Thus $\d_x \circ \varphi = \varphi \circ d_x $ and $\varphi$ is a chain map. 

Since $\d_x = \varphi \circ d_x \circ \varphi^{-1}$ and $d_x \circ d_x =0$, we have $\d_x \circ \d_x = \varphi \circ d_x \circ d_x \circ \varphi^{-1} = 0$.
Note that $\d_x$ preserves the module grading.
Therefore $(\PC,\d_x)$ is a chain complex of graded $\Fx$-modules and isomorphic to $(PC,d_x)$ induced by $\varphi$.
\end{proof}

Proposition \ref{PC-iso} shows that $(\PC, \d_x)$ is isomorphic to persistent chain complex $(PC,d_x)$. We will no longer distinguish between these two in the rest of this manuscript.

\subsection{Existence of the decomposition}

\begin{defn} \label{def-loc-finite}
A graded $\F $-space $M$ is locally finite dimensional if $M^k$ is finite dimensional over $\F$ for every $k \in \Z$, where $M^k$ is the $\F $-subspace of $M$ consisting of all homogeneous elements of degree $k$.
\end{defn}

\begin{lem} \label{LemGrading}
Let $\Fx$ be the graded polynomial ring with $deg(x)=1$. Assume that $M$ is a free graded $\mathbb{F}[x]$-module satisfying:
\begin{itemize}
\item   $\Fx$-grading of $M$ is bounded below,
   
    \item $M$ is locally finite dimensional over $\F$.
   \end{itemize}
Then $M$ admits a countable homogeneous basis.

Denote by $\lambda_M^k$ the $\mathbb{F}$-dimension of $(M/xM)^k$, where $(M/xM)^k$ is $\mathbb{F}$-subspace of $M/xM$ consisting of all elements of grading $k$. Assume that $g$ is the lowest $\Fx $-grading of $M$. Define the sequence $L_M$
as\[L_M=(\underbrace{g,\cdots, g}_{\lambda_M^g},\underbrace{g+1,\cdots, g+1}_{\lambda_M^{g+1}}, \cdots, \underbrace{g+k,\cdots, g+k}_{\lambda_M^{g+k}} , \cdots).\]
Let $E$ be a countable collection of homogeneous elements of M such that the cardinality of $E\cap M^k$ is finite for every $k$. After arranging the elements of $E$ in non-decreasing order by their degrees, the degrees of elements in $E$ form a non-decreasing sequence $L_E$. Then $L_B=L_M$ for any homogeneous basis $B$ of $M$.
   
\end{lem}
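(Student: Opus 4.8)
The plan is to reduce the whole statement to the structure of the graded $\F$-vector space $M/xM$, with every computation carried out one $\Fx$-degree at a time. I would start with the bookkeeping: since the grading of $M$ is bounded below by some $g$ and $M$ is locally finite dimensional, $M=\bigoplus_{k\ge g}M^k$ with $\dim_\F M^k<\infty$, so $M$ is at most countable dimensional over $\F$; consequently every homogeneous $\F$-linearly independent subset of $M$ — in particular every homogeneous $\Fx$-basis — is countable and meets each $M^k$ in a finite set, which is precisely what makes $L_E$, and hence $L_B$, a well-defined non-decreasing sequence. Next, since $M$ is free it is torsion-free over the domain $\Fx$, so $x\colon M^{k-1}\to M^k$ is injective; combined with the identity $(xM)^k=x\,M^{k-1}$ this gives $\lambda_M^k=\dim_\F M^k-\dim_\F M^{k-1}$, and telescoping (using $M^k=0$ for $k<g$) yields $\dim_\F M^k=\sum_{j=g}^{k}\lambda_M^j$. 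In particular $M/xM$ is itself bounded below and locally finite dimensional, so $L_M$ is well defined.

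For the existence of a countable homogeneous basis I would choose, in each degree $k\ge g$, a family of $\lambda_M^k$ homogeneous elements of $M^k$ lifting an $\F$-basis of $(M/xM)^k$, and let $\mathcal B$ be the union of these families. A graded Nakayama argument shows that $\mathcal B$ generates: writing $N=\Fx\mathcal B$ we have $N+xM=M$, so $M/N=x(M/N)$, and an induction on the degree starting from the bounded-below end forces $M/N=0$. That $\mathcal B$ is moreover $\Fx$-independent is the one place where freeness is genuinely used: the tautological surjection $\phi\colon F:=\bigoplus_{b\in\mathcal B}\Fx\{\deg b\}\twoheadrightarrow M$ satisfies $\dim_\F F^k=\#\{b\in\mathcal B:\deg b\le k\}=\sum_{j=g}^{k}\lambda_M^j=\dim_\F M^k<\infty$, so each $\phi^k$ is a surjection between equal-dimensional finite-dimensional spaces, hence an isomorphism; thus $\phi$ is an isomorphism and $\mathcal B$ is a homogeneous basis, with $L_{\mathcal B}=L_M$ by construction.

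For the uniqueness clause $L_B=L_M$ with $B$ an arbitrary homogeneous basis, I would use that $M=\bigoplus_{b\in B}\Fx b$ with each $\Fx b\cong\Fx$, so $xM=\bigoplus_{b\in B}x\,\Fx b$ and therefore $M/xM\cong\bigoplus_{b\in B}\big(\Fx b/x\,\Fx b\big)$; this exhibits $\{\,b+xM\,\}_{b\in B}$ as an $\F$-basis of $M/xM$ through the degree-preserving bijection $b\mapsto b+xM$. Restricting to degree $k$ shows $\#\{b\in B:\deg b=k\}=\dim_\F(M/xM)^k=\lambda_M^k$ for every $k$, and since $L_B$ lists each value $k$ with exactly this multiplicity in non-decreasing order, $L_B=L_M$ follows. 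The main obstacle — really the only step with genuine content — is securing the degree-wise finiteness that legitimizes the Nakayama induction and the dimension counts; once that is in place, both the independence of the lift $\mathcal B$ and the basis-independence of the multiset of degrees reduce to comparing graded $\F$-dimensions with those of $M/xM$.
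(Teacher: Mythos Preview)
Your argument is correct and shares the paper's core idea: both proofs establish $L_B=L_M$ by showing that the projection $\pi\colon M\to M/xM$ carries any homogeneous $\Fx$-basis $B$ to an $\F$-basis of $M/xM$, so that $\#\{b\in B:\deg b=k\}=\lambda_M^k$. The paper does this by a direct span/independence check (spanning is immediate from surjectivity of $\pi$; independence is a one-line computation using the $\Fx$-independence of $B$), whereas you obtain it from the direct-sum decomposition $M/xM\cong\bigoplus_{b\in B}\Fx b/x\Fx b$, which is a slightly cleaner packaging of the same fact.

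The genuine difference is in the existence of a homogeneous basis: the paper simply cites the literature, while you give a self-contained graded Nakayama argument (lift a basis of $M/xM$, prove generation by the bounded-below induction $M/N=x(M/N)\Rightarrow M/N=0$, and then deduce independence by a degree-wise dimension count $\dim_\F F^k=\sum_{j\le k}\lambda_M^j=\dim_\F M^k$). Your approach has the advantage of being explicit and of making the role of the hypotheses (bounded below, locally finite) completely transparent; the paper's citation keeps the exposition short. Either route is fine, and your added bookkeeping on countability and on the formula $\lambda_M^k=\dim_\F M^k-\dim_\F M^{k-1}$ is correct and useful for the downstream applications.
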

\begin{proof}
 $M$ has a homogeneous basis since free graded modules over polynomial rings with grading bounded below admit homogeneous bases \cite[Lemma 4.4]{MR3234803} \cite[Page 130, Exercise 3]{MR1096302}.
 \par
 Let $B$ be any homogeneous basis for $M$. To prove that $L_B=L_M$, it suffices to show that $\pi(B)$ is a homogeneous $\mathbb{F}$-basis for $(M/xM)$, where $\pi:M\xrightarrow{}(M/xM)$ is the canonical projection. Since $\pi$ is surjective, $\pi(B)$ spans $(M/xM)$. Assume that $\pi(B)$ is not $\mathbb{F}$-linearly independent, then there exist $P>0$ with $ b_p \in B$, $f_p\in \mathbb{F}$ and $g_p\in \mathbb{F}[x]$ for all $1 \le p \le P$, such that
 
\begin{equation} \label{eq:LG}
 \sum_{p=1}^P(f_p-g_px)b_p=0
\end{equation} 
and $f_p-g_px \neq 0$ for each $p$. By the $\mathbb{F}[x]$-linearly independence of $B$, all coefficients in equation \eqref{eq:LG} are zero. We have that $f_p-xg_q=0$, which implies that $f_p$ and $g_p$ are both equal to zero for all $p$. This is a contradiction.    

\end{proof}

\begin{lem} \label{LemDec}
Assume that\ $$\begin{tikzcd}
\cdots \arrow[r] & M \arrow[r, "d"] & N \arrow[r]  & \cdots
\end{tikzcd}$$
is a chain complex of free graded $\mathbb{F}[x]$-module satisfying:
\begin{itemize}
    \item Its differential $d$ is homogeneous and preserves the $\Fx $-grading;
    \item The $\mathbb{F}[x]$-grading of $M$ and $N$ are bounded below;
    \item $M$ and $N$ are locally finite dimensional over $\F$.
    \end{itemize}
Then $M$ has a homogeneous $\Fx$-basis $V \sqcup W$ and $N$ has a homogeneous $\Fx$-basis $\overline{W} \sqcup R$ satisfying 
 \begin{itemize}
    \item[(I)~] $d(v)=0$ for every $v\in V$;
    \item[(II)] There exists a bijection  $\bar{~}:~W \to \overline{W}$ mapping each $w \in W$ to a $\bar{w} \in \overline{W}$ such that $d(w) = x^m \bar{w}$ for some non-negative integer $m$ depending on $w$;
    \end{itemize}
Considering the submodules $M^{V}$, $M^{W}$, $N^{\overline{W}}$ and $N^R$ generated by $V$, $W$, $\overline{W}$, and $R$, the chain complex decomposes into a direct sum of three subchain complexes as
$$\begin{tikzcd}
\cdots \arrow[r] & M^{V} \arrow[r, "d" {name=L1}] & 0   &  \\
0 \arrow[r] & M^{W} \arrow[r, "d" {name=L2}] \arrow[u, from=L2, to=L1, phantom, "\bigoplus" description] & N^{\overline{W}} \arrow[r]  & 0 \\
 & 0 \arrow[r, "d" {name=L3}]  & N^R \arrow[r] \arrow[d, from=L2, to=L3, phantom, "\bigoplus" description] & \cdots
\end{tikzcd}$$

\end{lem}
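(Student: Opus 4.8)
The plan is to reduce Lemma~\ref{LemDec} to a graded version of the Smith normal form for the single map $d\colon M\to N$. The surrounding terms of the chain complex play no role: once the bases below are produced, the three asserted subcomplexes are obtained simply by decomposing $M$ and $N$ along them, so I will concentrate entirely on $d\colon M\to N$.

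First I would use that $\Fx$ is a PID, together with the fact (recalled in the proof of Lemma~\ref{LemGrading}, via \cite{MR3234803,MR1096302}) that a graded submodule of a free graded $\Fx$-module whose grading is bounded below is again free and admits a homogeneous basis. Since $d$ is homogeneous and degree-preserving, $\ker d\subseteq M$ and $\im d\subseteq N$ are graded submodules with grading bounded below, hence free graded modules. Because $\im d$ is free it is projective in the category of graded $\Fx$-modules, so the short exact sequence $0\to\ker d\to M\xrightarrow{d}\im d\to 0$ admits a homogeneous splitting; write $M=\ker d\oplus W'$ with $d$ restricting to a degree-preserving isomorphism $d|_{W'}\colon W'\xrightarrow{\ \cong\ }\im d$. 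Let $V$ be any homogeneous basis of $\ker d$. It then suffices to produce a homogeneous basis of $N$ adapted to the submodule $W'':=\im d$, namely a homogeneous basis $\overline W\sqcup R$ of $N$ and nonnegative integers $\{m_{\bar w}\}_{\bar w\in\overline W}$ such that $\{x^{m_{\bar w}}\bar w\mid\bar w\in\overline W\}$ is a homogeneous basis of $W''$. Granting this, put $w:=(d|_{W'})^{-1}(x^{m_{\bar w}}\bar w)$, which is homogeneous since $d|_{W'}$ preserves degree; then $W:=\{w\}$ is a homogeneous basis of $W'$, $V\sqcup W$ is a homogeneous basis of $M$, and $d(v)=0$ for $v\in V$ while $d(w)=x^{m_{\bar w}}\bar w$, which is exactly (I) and (II) with $\bar{\ }\colon W\to\overline W$ the induced bijection. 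Taking $M^V=\ker d$, $M^W=W'$, $N^{\overline W}=\langle\overline W\rangle$ and $N^R=\langle R\rangle$, the identities $M=M^V\oplus M^W$, $N=N^{\overline W}\oplus N^R$ and $d(M^V)=0$, $d(M^W)=\im d\subseteq N^{\overline W}$ give the claimed decomposition into three subcomplexes.

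It remains to establish the graded Smith normal form for $W''\subseteq N$. Since $N$ is torsion-free (free over the domain $\Fx$) and its grading is bounded below, $\bigcap_{k\ge 0}x^kN=0$, so every nonzero homogeneous $u\in N$ has a finite $x$-valuation $\mu(u)=\max\{k\mid u\in x^kN\}$ and a primitive part $u=x^{\mu(u)}\tilde u$ with $\tilde u\notin xN$. I would build the bases by induction on the $\Fx$-grading, which is legitimate because the grading is bounded below and, by local finiteness, each graded piece of $N$ and $W''$ is finite-dimensional, so $N$ and $W''$ have only finitely many basis elements in each degree and are at most countably generated. Concretely, fix a homogeneous basis of $W''$ ordered by non-decreasing degree and process its elements one at a time: at each step, using reduction modulo $xN$ to detect primitive parts, peel off a homogeneous rank-one free summand $\Fx f$ of $N$ on which the part of $W''$ processed so far is exactly $\Fx x^{m}f$, then recurse into $xN$ to capture the part of $W''$ lying deeper in the $x$-filtration; in each fixed degree $k$ this recursion terminates because an element of degree $k$ has $x$-valuation at most $k-g$, where $g$ is the lowest degree of $N$. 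The (degreewise finite, hence well-defined) union of the summands produced over all steps gives $\overline W$, and extending the resulting homogeneous independent set to a homogeneous basis of $N$ gives $R$. The main obstacle is precisely this bookkeeping — verifying that the partial bases assembled degree by degree glue to genuine homogeneous bases of $W''$ and of $N$, and that multiplication by $x$ being injective prevents a choice made in degree $k$ from disturbing lower degrees — for which the bounded-below and locally-finite hypotheses, through $\bigcap_{k}x^kN=0$ and the finiteness of each graded piece, are exactly what is needed.
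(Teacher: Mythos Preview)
Your approach is correct and genuinely different from the paper's. You first split $M=\ker d\oplus W'$ abstractly, using that $\im d$ is a graded submodule of a bounded-below free graded $\Fx$-module, hence free and projective; then you reduce the remaining work to a graded Smith normal form for the inclusion $\im d\subseteq N$, and finally pull back along the degree-preserving isomorphism $d|_{W'}$. The paper, by contrast, never isolates $\ker d$ in advance: it runs a single Gaussian-elimination-style induction directly on a homogeneous basis of $M$, processing elements in non-decreasing degree. Each element $w$ with $d(w)\neq 0$ determines $\bar w=d(w)/x^{m}$ by peeling off the minimal power of $x$, and then \emph{the remaining basis elements of $M$} are modified (by subtracting suitable multiples of $w$) so that their images avoid the new basis vector $\bar w$ of $N$. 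Thus the paper updates both bases simultaneously and never needs to invert $d|_{W'}$. Your route is more structural and makes clear that the lemma is really ``graded splitting of the kernel'' plus ``graded elementary divisors''; the paper's route is more explicitly algorithmic and leaves no bookkeeping to the reader.

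One small point: your Smith-normal-form sketch says to ``recurse into $xN$'', but the correct recursion is into the complementary summand of $\Fx f$ in $N$, after replacing each remaining generator $w_j$ of $\im d$ by $w_j-(\alpha_j/x^{m})w_1$ so that it lies in that complement (the divisibility $x^{m}\mid\alpha_j$ follows from homogeneity and the minimality of $\deg w_1$, exactly as in the paper's update step $z'=z-(\beta_z/x^{m_{\hat e}})w$). Once this is straightened out, your induction is essentially the same elimination the paper performs, just applied to the inclusion $\im d\subseteq N$ rather than to the map $d\colon M\to N$. Finally, your remark that the surrounding terms of the complex play no role is justified: since your $M^V$ is literally $\ker d$, the incoming differential lands in $M^V$ by $d^2=0$, and each $\bar w$ lies in the kernel of the outgoing differential because $x^{m}\bar w=d(w)$ is a cycle and $N$ is torsion-free---the paper verifies the first of these explicitly at the end of its proof.
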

    
\begin{proof}
We prove this by induction. 
Let $B^{(0)}$ be a homogeneous basis for $M$ and $\Tilde{B}^{(0)}$ be a homogeneous basis for $N$, respectively. 
By Lemma \ref{LemGrading}, $L_M$ is the non-decreasing sequence of degrees in any homogeneous basis of $M$.
Let sets $W^{(0)}$, $V^{(0)}$ and $\overline{W}^{(0)}$ be empty sets. Let $Z^{(0)} = B^{(0)}$ and $R^{(0)} = \Tilde{B}^{(0)}$. 
We will construct by induction the sets $B^{(l)}$, $\Tilde{B}^{(l)}$, $V^{(l)}$, $W^{(l)}$, $O^{(l)} $, $Z^{(l)}$, $\overline{W}^{(l)}$ and $R^{(l)}$, satisfying the following conditions:
 \begin{itemize}
 	\item[(i)] $B^{(l)}$ is a homogeneous basis for $M$ and $\Tilde{B}^{(l)}$ is a homogeneous basis for $N$;
 	\item[(ii)] $O^{(l)} = V^{(l)} \sqcup W^{(l)}$,  $ B^{(l)} = Z^{(l)} \sqcup O^{(l)}$ and $\Tilde{B}^{(l)} = \overline{W}^{(l)} \sqcup R^{(l)}$;
 	\item[(iii)] $V^{(l-1)} \subset V^{(l)}$, $W^{(l-1)} \subset W^{(l)}$, $\overline{W}^{(l-1)} \subset \overline{W}^{(l)}$ and $R^{(l-1)} \supset R^{(l)}$;
    \item[(iv)] $d(Z^{(l)}) \subset \mathrm{Span}_{\Fx } R^{(l)}$;
    \item[(v)] $d(v)=0$ for every $v\in V^{(l)}$;
    \item[(vi)] There exists a bijection  $\bar{~}:~W^{(l)} \to \overline{W}^{(l)}$ mapping each $w \in W^{(l)}$ to a $\bar{w} \in \overline{W}^{(l)}$ such that $d(w) = x^m \bar{w}$ for some positive integer $m$ depending on $w$;
    \item[(vii)] The sequence $L_{O^{(l)}}$ for set of homogeneous elements\footnote{Notation follows Lemma \ref{LemGrading}.} is consist of the first $l$ terms of $L_M$.
    \end{itemize}
In each iteration, we remove an element from $Z^{(l)}$ with the lowest grading and add it to $O^{(l)}$.
We will induct on the size of organized base set $O^{(l)}$. 
Conditions (i)-(vii) are trivially true when $l=0$.
Then assume that we have updated all these sets satisfying conditions (i)-(vii) after $l$ steps. 
By (vii), $|O^{(l)}| = |V^{(l)}|  + |W^{(l)}| = l$. We will then construct $B^{(l+1)}$, $\Tilde{B}^{(l+1)}$, $V^{(l+1)}$, $W^{(l+1)}$, $\overline{W}^{(l+1)}$, $O^{(l+1)} $, $Z^{(l+1)}$ and $R^{(l+1)}$.  

 \par
Let $w$ be an element of $Z^{(l)}$ with the lowest module grading, whose degree by Lemma \ref{LemGrading} is the $(l+1)$-th term of $L_M$. 

If $d(w)=0$, we will do the following updates: 
\[ V^{(l+1)}=V^{(l)} \cup \{w\},\  W^{(l+1)} = W^{(l)},\ \overline{W}^{(l+1)} = \overline{W}^{(l)}, \]
\[ Z^{(l+1)}=Z^{(l)} \setminus \{w\} ,\ O^{(l+1)}=V^{(l+1)} \cup W^{(l+1)} ,\  R^{(l+1)} = R^{(l)}, \]
\[ B^{(l+1)} = Z^{(l+1)} \cup O^{(l+1)} ,\ \Tilde{B}^{(l+1)} = \overline{W}^{(l+1)} \cup R^{(l+1)}. \]
This update preserves condition (v) and automatically satisfies all other conditions.

 If $d(w)\neq0$, assume that $d(w)=\sum_{e \in E} \alpha_e e$ where $E$ is a finite subset of $R^{(l)}$ by (iv), and each $\alpha_e=c_e x^{m_e}$ is a non-zero monomial in $\Fx$.  
Let $\hat{e}$ be an element with the highest module grading in $E$. Note that $m_e=\deg(w)-\deg(e)$ and therefore $m_{\hat{e}} = \min_{e \in E} m_e$. 
Define
  \[\bar{w}=\frac{d(w)}{ x^{m_{\hat{e}}}}=c_{\hat{e}} \hat{e}+\sum_{e \in E, e \neq \hat{e}}c_e x^{m_e-m_{\hat{e}}}e.\] 
  and do the following updates:
\[V^{(l+1)}=V^{(l)},\  W^{(l+1)} = W^{(l)} \cup \{w\},\ \overline{W}^{(l+1)} = \overline{W}^{(l)} \cup \{\bar{w}\},  \]
\[R^{(l+1)} = R^{(l)}\setminus \{\hat{e}\},\ O^{(l+1)}=V^{(l+1)} \cup W^{(l+1)} ,\ \Tilde{B}^{(l+1)}=R^{(l+1)} \cup \overline{W}^{(l+1)}. \]
Then $\Tilde{B}^{(l+1)}$ is still a homogeneous basis for $N$.
Thus the bijection $\ \bar{•}\ $ in (vi) is extend by $w \mapsto \bar{w}$. 
Note that $d(w) = x^m \bar{w}$.
It remains to construct $Z^{(l+1)}$ and show it satisfies condition (iv).

By conditions (iv) in the previous step,  any element in $\overline{W}^{(l)}$ will not appear in the image of $z \in Z^{(l)}$ since $z$ is not in $W^{(l)}$ according to condition (ii).
Now consider for each $z$ in $Z^{(l)}\setminus \{w\}$, then $d(z)$ is a linear combination of base elements in $R^{(l)}$.
We have $d(z)=\beta_{z}\bar{w}+\sum_{e \in E_z} \beta_e e$, where $\beta$ are the coefficient monomials and $E_z$ is a finite subset of $R^{(l+1)}$. 
Notice that $\deg(\beta_z)=\deg(z)-\deg(\bar{w})\geq \deg(w)- \deg(\bar{w})=m_{\hat{e}}$. 
Define
\[z'=z-\frac{\beta_z}{x^{m_{\hat{e}}}}w.\]
Then $d(z')=d(z)-\frac{\beta_z}{x^{m_{\hat{e}}}}d(w)=d(z)-\beta_z \bar{w}=\sum_{e \in E_z} \beta_e e$ and therefore $d(z') \in \mathrm{Span}_{\Fx } R^{(l+1)}$. 
Now we update $Z^{(l+1)}$ and $B^{(l+1)}$ by
\[ Z^{(l+1)}=\{z'|z \in Z^{(l)} \setminus \{w\} \} \]
and
\[ B^{(l+1)} = Z^{(l+1)} \cup O^{(l+1)} .\]
Hence  $d(Z^{(l+1)}) \subset \mathrm{Span}_{\Fx } R^{(l+1)}$. $B^{(l+1)}$ is obtained from $B^{(l)}$ by an invertible $\Fx $-linear transformation, so $B^{(l+1)}$ is also a basis for $M$. This proves condition (iv). 
Note that $|O^{(l+1)}| = |V^{(l)}|  + |\{w\}| + |W^{(l)}|= l+1$. Therefore conditions (i)-(vi) work for all $B^{(l)}$, $\Tilde{B}^{(l)}$, $V^{(l)}$, $W^{(l)}$, $\overline{W}^{(l)}$, $O^{(l)} $, $Z^{(l)}$ and $R^{(l)}$ for each natural number $l$.

Recall that $M^k$ is finite dimensional for every $k$. Let $g$ be the lowest grading of $M$, which is the first term in $L_M$. Thus $\bigoplus_{k=g}^K M^k$ is finite dimensional for any $K \ge g$. 
Set $l_K= \dim_\F \bigoplus_{k=g}^K M^k = \sum_{k=g}^K \lambda_M^k$ where $\lambda_M^k$ is given in Lemma \ref{LemGrading}. 
Observe that $|O^{(l_K)}| = l_K$ and 
\[ \bigoplus_{k=g}^K M^k \subseteq \mathrm{Span}_{\Fx } O^{(l_K)} \]
since $Z^{(l_K)}$ contains only elements that have degree larger than $K$. 
Thus
\[ M = \bigoplus_{k=g}^\infty M^k \subseteq \mathrm{Span}_{\Fx } \bigcup_{l=0}^\infty O^{(l)}. \]
Then any element in $M$ can be written as a linear combination of elements in some $O^{(l)}$.
Let \[V=\bigcup_{l=0}^\infty V^{(l)} \ \text{and} \ W=\bigcup_{l=0}^\infty W^{(l)},\] then this is our desired $V$ and $W$. 
Note that $V \sqcup W=\bigcup_{l=0}^\infty O^{(l)}$.
Each $O^{(l)}$ is a subset of a homogeneous basis $B^{(l)}$ and therefore linearly independent. 
Hence $V \sqcup W$ is indeed a homogeneous basis for $M$.

In each step, we have constructed a bijection between $W^{(l)}$ and $\overline{W}^{(l)}$. Define 
\[ \overline{W}=\bigcup_{l=0}^\infty \overline{W}^{(l)} \ \text{and} \ R = \bigcap_{l=0}^\infty R^{(l)} .  \]
Let $g'$ be the lowest module grading of $N$.
Fix a $K \ge g'$, $\tilde{B}^{(l)} \cap \left( \bigoplus_{k=g'}^K N^k \right)$ is finite for every $l$ since $N$ is locally finite dimensional.
Note that $R^{(l-1)} \supset R^{(l)}$ for each $l$ from condition (iii). 
Then for there exists a positive integer $l_0$ depending on $K$, such that
$$R^{(l_0)} \cap \left( \bigoplus_{k=g'}^K N^k \right) = \left( \bigcap_{l=0}^\infty R^{(l)} \right) \cap \left( \bigoplus_{k=g'}^K N^k \right) = R \cap \left( \bigoplus_{k=g'}^K N^k \right).$$
Therefore $\left( \bigoplus_{k=g'}^K N^k \right)$ has a basis $\tilde{B}^{(l_0)} \cap \left( \bigoplus_{k=g'}^K N^k \right) = ( \overline{W} \sqcup R) \cap \left( \bigoplus_{k=g'}^K N^k \right)$.
$\overline{W} \sqcup R$ spans $N$ as $K$ growing. Hence $\overline{W} \sqcup R$ is indeed a homogeneous basis for $N$.
This concludes the constructions of $V$, $W$, $\overline{W}$ and $R$.

Notice that after the construction of $\overline{W}$ and $R$, we obtained a subchain complex
$$\begin{tikzcd} 
0 \arrow[r, "d" ]  & N^R \arrow[r]  & \cdots
\end{tikzcd}$$
where $N^R$ is the submodule generated by $R$ in $N$.
Denote by $N'$ be the term before $M$ in the chain complex and consider the other direct summand
$$\begin{tikzcd} 
\cdots \ar[r] & N' \ar[r,"d"] & M \arrow[r, "d" ]  & N^{\overline{W}} \arrow[r]  & 0.
\end{tikzcd}$$
If $d(u) \neq 0$ for any $u \in N'$, then $d(u)=\sum_{v \in V} \gamma_v v + \sum_{w \in W} \gamma_w w$ where all but finitely many $\gamma_v$ and $\gamma_w$ are $0$. 
We claim that $\gamma_w = 0$ for each $w$. Otherwise, since $d(v)=0$ for each $v \in V$ and $d(w) = x^{m_w} \bar{w}$ for each $w \in W$, we have
$$0=(d \circ d)(u)=\sum_{w \in W} \gamma_w d(w) = \sum_{w \in W} \gamma_w x^{m_w} \bar{w}.$$ 
Since each $\bar{w} \in \overline{W}$ is an $\Fx $-base element, $\gamma_w$ must be $0$ if $w \in W$. Therefore we will further decompose the subchain complex into
$$\begin{tikzcd}
\cdots \arrow[r] & M^{V} \arrow[r, "d" ] & 0 
\end{tikzcd}$$
where $M^V$ is the submodule in $M$ generated be $V$, and 
$$\begin{tikzcd}
0 \arrow[r] & M^{W} \arrow[r, "d"]  & N^{\overline{W}} \arrow[r]  & 0
\end{tikzcd}$$
where $M^{W}$ is the submodule in $M$ generated by $W$ as well as  $N^{\overline{W}}$ is the submodule in $N$ generated by ${\overline{W}}$.

\end{proof}

\begin{prop} \label{ThmDec}
Assume that 
$$(A,d)=\begin{tikzcd}
\cdots \arrow[r, "d_{n+1}"] & A_n \arrow[r, "d_n"] & A_{n-1} \arrow[r, "d_{n-1}"] & \cdots
\end{tikzcd}$$ 
is a chain complex of free graded $\mathbb{F}[x]$-module satisfying:
\begin{itemize}
    \item Its differential $d$ is homogeneous and preserves the $\Fx $-grading of $A$;
    \item For each $n$, the $\mathbb{F}[x]$-grading of $A_n$ is bounded below;
    \item For each $n$, $A_n$ is locally finite dimensional over $\F$.
    \end{itemize}
Then every $A_n$ has a homogeneous $\Fx$-basis that is the disjoint union of three subsets $V_n$, $W_n$ and $\overline{W}_n$, satisfying the following conditions:
 \begin{itemize}
    \item[(I)~] $d(v)=0$ for every $v\in V_n$;
    \item[(II)] There exists a bijection  $\bar{~}:~W_n \to \overline{W}_{n-1}$ mapping each $w \in W_n$ to a $\bar{w} \in \overline{W}_{n-1}$ such that $d(w) = x^m \bar{w}$ for some non-negative integer $m$ depending on $w$.

 \end{itemize}
    
\end{prop}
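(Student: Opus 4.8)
The plan is to produce the bases of Proposition \ref{ThmDec} by applying Lemma \ref{LemDec} repeatedly along the complex, resolving one homological degree at a time. The key point to extract first is that the proof of Lemma \ref{LemDec} establishes slightly more than its statement: for the link $A_n \xrightarrow{d} A_{n-1}$, the term $A_{n+1}$ preceding $A_n$ satisfies $d(A_{n+1}) \subseteq M^V$. Consequently one application of Lemma \ref{LemDec} to $A_n \to A_{n-1}$ splits the entire complex into the middle piece $0 \to M^W \xrightarrow{d} N^{\overline W} \to 0$ (on which $d$ acts by $w \mapsto x^m \bar w$ for $w\in W$, hence already of the form required by the proposition), an \emph{upper tail} $\cdots \to A_{n+1} \xrightarrow{d} M^V \to 0$ in which $M^V$ carries the zero differential, and a \emph{lower tail} $0 \to N^R \xrightarrow{d} A_{n-2} \to \cdots$ in which the term to the left of $N^R$ is zero. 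In both tails the module hypotheses of Lemma \ref{LemDec} --- freeness, boundedness below of the $\Fx$-grading, and local finite-dimensionality over $\F$ --- pass to the submodules $M^V$ and $N^R$ and to the remaining $A_m$'s, so the lemma can be applied again.

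Concretely, I would fix an arbitrary seed degree $n_0$, apply Lemma \ref{LemDec} to $A_{n_0} \to A_{n_0-1}$, and recurse in both directions. Going down: apply the lemma to $N^R \to A_{n_0-2}$ --- since the term to the left of $N^R$ is $0$, there is no condition to verify on an incoming map --- then to the new $R$-submodule against $A_{n_0-3}$, and so on. Going up: apply the lemma to $A_{n_0+1} \xrightarrow{d} M^V$, which is a genuine link in a chain complex because $d(A_{n_0+1}) \subseteq M^V$ and $M^V$ has zero differential, then to $A_{n_0+2}$ against the newly produced $V$-submodule, and so on. Each step yields one more middle piece of the required form. From these I read off, for each $n$: $W_n$ is the $W$-part of the link emanating from $A_n$; $\overline W_n$ is the $\overline W$-part landing in $A_n$ from the link emanating from $A_{n+1}$; and $V_n$ is the residual free cycle summand (a $V$-part when $n < n_0$, a leftover $R$-part when $n \ge n_0$).

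Three verifications then remain, all mechanical once this scheme is set up. First, that $V_n \sqcup W_n \sqcup \overline W_n$ is a homogeneous $\Fx$-basis of $A_n$: each $A_n$ is completely resolved after finitely many steps of the recursion --- the downward step that reaches degree $n$ together with the upward step that finishes splitting its cycle submodule (for $n=n_0$, the first downward and the first upward step) --- so its basis is a finite concatenation of bases of $\Fx$-direct summands. Second, condition (I): every element of $V_n$ is either a $V$-type basis vector, annihilated by $d$ by Lemma \ref{LemDec}, or a leftover $R$-type vector lying inside a submodule on which $d$ already vanishes; either way $d(v)=0$. Third, condition (II): the bijection $W_n \to \overline W_{n-1}$ with $d(w)=x^m\bar w$ is precisely the bijection supplied by Lemma \ref{LemDec} at the relevant step, and its target is identified with $\overline W_{n-1}$ because the construction designates exactly that set as the $\overline W$-part of $A_{n-1}$.

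The main obstacle is organizational rather than technical: the homological grading is unbounded in both directions, so there is no canonical place to begin the iteration, and one must check that the two-sided recursion from an arbitrary seed is well defined and globally consistent --- in particular that the assignments $V_n$, $W_n$, $\overline W_n$ agree at the seed degree $n_0$, where the decomposition of $A_{n_0}$ is assembled from both a downward and an upward application of Lemma \ref{LemDec}. Beyond this, no new quantitative input is needed: the only finiteness facts used are boundedness below of the grading and local finite-dimensionality, exactly as in the proof of Lemma \ref{LemDec}.
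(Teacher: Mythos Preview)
Your proposal is correct and follows essentially the same strategy as the paper: seed at one link (the paper uses $A_1 \to A_0$), then iterate Lemma \ref{LemDec} upward into the successive $V$-submodules and downward from the successive $R$-submodules, reading off $V_n, W_n, \overline{W}_n$ exactly as you describe. One minor correction: the fact that the incoming differential lands in $M^V$ is already part of the \emph{statement} of Lemma \ref{LemDec} (it is encoded in the displayed direct-sum decomposition of the chain complex), not an additional fact extracted from its proof.
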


\begin{proof}

By Lemma \ref{LemGrading}, each $A_n$ has a homogeneous basis.
Since for each $n$, $A_n$ satisfies all the conditions in Lemma \ref{LemDec},  we may apply this lemma to
\[\begin{tikzcd}
\cdots \ar[r, "d_2"]  & A_1 \arrow[r, "d_1"] & A_0 \arrow[r, "d_0"] & \cdots
\end{tikzcd}\]
We will obtain $\Fx $-basis sets $\Tilde{V}_1 \sqcup W_1$ for $A_1$ and $\overline{W}_0 \sqcup R_0$ for $A_0$ with a direct sum decomposition
$$\begin{tikzcd}
\cdots \arrow[r, "d_2"] & \Tilde{A}_1 \arrow[r, "" {name=L1}] & 0   &  \\
0 \arrow[r] & \cU_1 \arrow[r, "d_1" {name=L2}] \arrow[u, from=L2, to=L1, phantom, "\bigoplus" description] & \overline{\cU}_0 \arrow[r]  & 0 \\
 & 0 \arrow[r, "" {name=L3}]  & \Tilde{A}_0 \arrow[r, "d_0"] \arrow[d, from=L2, to=L3, phantom, "\bigoplus" description] & \cdots
\end{tikzcd}$$
Here $\Tilde{A}_1$,  $\cU_1$,  $\overline{\cU}_0$ and  $\Tilde{A}_0$ are submodules generated by sets $\Tilde{V}_1$, $W_1$,  $\overline{W}_0$ and $R_0$ respectively.

We will prove this proposition by two inductions separately on the upward branch
\[\begin{tikzcd}
\cdots \ar[r, "d_3"]  & A_2 \arrow[r, "d_2"] & \Tilde{A}_1 \arrow[r] & 0
\end{tikzcd}\]
and the downward branch
\[\begin{tikzcd}
0 \ar[r]  & \Tilde{A}_0 \arrow[r, "d_0"] & A_{-1} \arrow[r, "d_{-1}"] & \cdots
\end{tikzcd}\]

For the upward branch, we first apply Lemma \ref{LemDec} to 
\[\begin{tikzcd}
\cdots \ar[r, "d_3"]  & A_2 \arrow[r, "d_2"] & \Tilde{A}_1 \arrow[r] & 0.
\end{tikzcd}\]
It decomposes into
$$\begin{tikzcd}
\cdots \arrow[r, "d_3"] & \Tilde{A}_2 \arrow[r, "" {name=L1}] & 0   &  \\
0 \arrow[r] & \cU_2 \arrow[r, "d_2" {name=L2}] \arrow[u, from=L2, to=L1, phantom, "\bigoplus" description] & \overline{\cU}_1 \arrow[r]  & 0 \\
 & 0 \arrow[r, "" {name=L3}]  & F_1 \arrow[r] \arrow[d, from=L2, to=L3, phantom, "\bigoplus" description] & 0
\end{tikzcd}$$
where $\Tilde{A}_2$,  $\cU_2$,  $\overline{\cU}_1$ and  $F_1$ are the submodules generated by sets $\Tilde{V}_2$, $W_2$,  $\overline{W}_1$ and $R_1$ respectively. 
Note that $\Tilde{V}_2 \sqcup W_2$ is a basis for $A_2 = \Tilde{A}_2 \oplus \cU_2$ and $\overline{W}_1 \sqcup R_1$ is a basis for $\Tilde{A}_1 = \overline{\cU}_1 \oplus F_1$. $R_1$ satisfies condition (I) in $A_1$ thus let $V_1=R_1$. 
Now we have obtained $V_1$, $W_1$ and $\overline{W}_1$. 
Since $W_1$ generates $\cU_1$ and $\overline{W}_1 \sqcup V_1$ is a basis for $\Tilde{A}_1$, $V_1 \sqcup W_1 \sqcup \overline{W}_1$ is a basis for $A_1 = \cU_1 \oplus \Tilde{A}_1$, satisfying conditions (I) and (II).

Then assume that $p > 1$ and we have obtained all $V_k$, $W_k$ and $\overline{W}_k$ for $p > k \ge 1$. 
Also, $W_p$ generates $\cU_p$ and $\Tilde{V}_p$ generates $\Tilde{A}_p$ while $W_p \sqcup \Tilde{V}_p$ being a basis for $A_p = \cU_p \oplus \Tilde{A}_p$.
Then it suffices to obtain $V_p$ and $\overline{W}_p$ from $\Tilde{V}_p$. 
By applying Lemma \ref{LemDec}, the chain complex
$$\begin{tikzcd}
 \cdots \ar[r] & A_{p+1} \arrow[r, "d_{p+1}"] & \Tilde{A}_p \arrow[r, "d_p"] & 0  
 \end{tikzcd}$$
decomposes into
$$\begin{tikzcd}
\cdots \arrow[r, "d_{p+2}"] & \Tilde{A}_{p+1} \arrow[r, "" {name=L1}] & 0   &  \\
0 \arrow[r] & \cU_{p+1} \arrow[r, "d_{p+1}" {name=L2}] \arrow[u, from=L2, to=L1, phantom, "\bigoplus" description] & \overline{\cU}_p \arrow[r]  & 0 \\
 & 0 \arrow[r, "" {name=L3}]  & F_p \arrow[r] \arrow[d, from=L2, to=L3, phantom, "\bigoplus" description] & 0
\end{tikzcd}$$
where $\Tilde{A}_{p+1}$,  $\cU_{p+1}$,  $\overline{\cU}_p$ and  $F_p$ are the submodules generated by sets $\Tilde{V}_{p+1}$, $W_{p+1}$,  $\overline{W}_p$ and $R_p$ respectively.
Note that $\Tilde{V}_{p+1} \sqcup W_{p+1}$ is a basis for $A_{p+1} = \Tilde{A}_{p+1} \oplus \cU_{p+1}$ and $\overline{W}_{p} \sqcup R_{p}$ is a basis for $\Tilde{A}_{q} = \overline{\cU}_p \oplus F_p$. Since $d_p$ maps $\Tilde{A}_{p}$ to $0$, we assign $V_p = R_p$ satisfying condition (I). Hence we have obtained $V_p \sqcup W_p \sqcup \overline{W}_p$ as a basis for $A_p= \cU_p \oplus \Tilde{A}_p$, satisfying conditions (I) and (II).

For the downward branch, we first apply Lemma \ref{LemDec} to
\[\begin{tikzcd}
0 \ar[r]  & \Tilde{A}_0 \arrow[r, "d_0"] & A_{-1} \arrow[r, "d_{-1}"] & \cdots
\end{tikzcd}\]
It decomposes into
$$\begin{tikzcd}
0 \arrow[r] & F_0 \arrow[r, "" {name=L1}] & 0   &  \\
0 \arrow[r] & \cU_0 \arrow[r, "d_0" {name=L2}] \arrow[u, from=L2, to=L1, phantom, "\bigoplus" description] & \overline{\cU}_{-1} \arrow[r]  & 0 \\
 & 0 \arrow[r, "" {name=L3}]  & \Tilde{A}_{-1} \arrow[r, "d_{-1}"] \arrow[d, from=L2, to=L3, phantom, "\bigoplus" description] & \cdots
\end{tikzcd}$$
where $F_0$,  $\cU_0$,  $\overline{\cU}_{-1}$ and  $\Tilde{A}_{-1}$ are the submodules generated by sets $V_0$, $W_0$,  $\overline{W}_{-1}$ and $R_{-1}$ respectively. 
Note that $V_0 \sqcup W_0$ is a basis for $\Tilde{A}_0 = F_0 \oplus \cU_0$ and $\overline{W}_{-1} \sqcup R_{-1}$ is a basis for $A_{-1} = \overline{\cU}_{-1} \oplus \Tilde{A}_{-1}$. 
Recall that $\overline{W}_0$ is obtained at the beginning of the proof, thus now we have obtained all $V_0$, $W_0$ and $\overline{W}_0$. 
Since $\overline{W}_0$ generates $\overline{\cU}_0$ and $V_0 \sqcup W_0$ is a basis for $\Tilde{A}_0$, $V_0 \sqcup W_0 \sqcup \overline{W}_0$ is a basis for $A_0= \overline{\cU}_0 \oplus \Tilde{A}_0$, satisfying conditions (I) and (II).

Then assume that $q < 0$ and we have obtained all $V_k$, $W_k$ and $\overline{W}_k$ for $0 \le k < q$. 
Also, $\overline{W}_q$ generates $\overline{\cU}_q$ and $R_q$ generates $\Tilde{A}_q$ while $\overline{W}_q \sqcup R_q$ being a basis for $A_q = \overline{\cU}_q \oplus \Tilde{A}_q$.
Then it suffices to obtain $V_q$ and $W_q$ from $R_q$. 
By applying Lemma \ref{LemDec}, the chain complex
$$\begin{tikzcd}
0 \arrow[r]  & \Tilde{A}_{q} \arrow[r, "d_{q}"] & A_{q-1} \ar[r, "d_{q-1}"]  & \cdots
\end{tikzcd}$$
decomposes into
$$\begin{tikzcd}
0 \arrow[r] & F_q \arrow[r, "" {name=L1}] & 0   &  \\
0 \arrow[r] & \cU_{q} \arrow[r, "d_{q}" {name=L2}] \arrow[u, from=L2, to=L1, phantom, "\bigoplus" description] & \overline{\cU}_{q-1} \arrow[r]  & 0 \\
 & 0 \arrow[r, "" {name=L3}]  & \Tilde{A}_{q-1} \arrow[r, "d_{q-1}"] \arrow[d, from=L2, to=L3, phantom, "\bigoplus" description] & \cdots
\end{tikzcd}$$
where $F_q$,  $\cU_q$,  $\overline{\cU}_{q-1}$ and  $\Tilde{A}_{q-1}$ are the submodules generated by sets $V_q$, $W_q$,  $\overline{W}_{q-1}$ and $R_{q-1}$ respectively. 
Note that $V_q \sqcup W_q$ is a basis for $\Tilde{A}_q = F_q \oplus \cU_q$ and $\overline{W}_{q-1} \sqcup R_{q-1}$ is a basis for $A_{q-1} = \overline{\cU}_{q-1} \oplus \Tilde{A}_{q-1}$. 
Hence we have obtained $V_q \sqcup W_q \sqcup \overline{W}_q$ as a basis for $A_q = \overline{U}_q \oplus \Tilde{A}_q$, satisfying conditions (I) and (II).

In conclusion, we have obtained $V_n \sqcup W_n \sqcup \overline{W}_n$ be a homogeneous basis of $A_n$ satisfying condition (I) and (II) for each $n$.
\end{proof}

\begin{cor} \label{UDec}
The chain complex $(A,d)$ in Proposition \ref{ThmDec} is homotopic to the direct sum of chain complexes of the Types \eqref{eq:UF} and \eqref{eq:UT} given in Definition \ref{def-decomp-type}.
Each $v \in V_n$ generates a component $U_{n, \deg v, \infty}$. 
Each pair $w \in W_{n+1}$ and $\bar{w} \in \overline{W}_n$ generate a component $U_{n, \deg \bar{w}, m_w}$ where $d(w) = x^{m_w} \bar{w}$ for some positive integer $m_w$ depending on $w$. 

Recall that $\infZ = \mathbb{Z}_+ \cup \{+\infty\}$. 
Then for each $n$, there exist $K_n \in \infZ$ and the sequence $\{(s_n(i),m_n(i))\}_{i=1}^{K_n} \subseteq \mathbb{Z} \times \infZ $ satisfies $s_n(i) \le s_n(i+1)$ and $m_n(i) \le m_n(i+1)$ if $s_n(i) = s_n(i+1)$, such that
$$A \simeq \bigoplus_{n=-\infty}^\infty  \bigoplus_{i=1}^{K_n}  U_{n,s_n(i),m_n(i)} .$$
Consequently,
$$H(A) \cong \bigoplus_{n=-\infty}^\infty  \bigoplus_{i=1}^{K_n}  H(U_{n,s_n(i),m_n(i)}) $$
where $H(\U) \cong \fxc[n][s] $ and $H(\U[m]) \cong \Fx/(x^m)||n||\{s\}$.
 
\end{cor}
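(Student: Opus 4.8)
The plan is to read Corollary \ref{UDec} as an assembly of Proposition \ref{ThmDec}: once the homogeneous bases $V_n \sqcup W_n \sqcup \overline{W}_n$ of the modules $A_n$ are available, the complex $(A,d)$ is literally an internal direct sum of rank-one and rank-two pieces, and all that remains is to identify those pieces and organize the bookkeeping.

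First I would record the one extra relation that makes the splitting a splitting of \emph{chain} complexes. For $w \in W_{n+1}$ with image $\bar{w} \in \overline{W}_n$ under the bijection of Proposition \ref{ThmDec}(II) and $d(w) = x^{m_w}\bar{w}$, applying $d$ gives $0 = d^2(w) = x^{m_w} d(\bar{w})$; since $A_{n-1}$ is a free $\Fx$-module, hence torsion-free, this forces $d(\bar{w}) = 0$. Together with condition (I) this shows that $d$ annihilates the submodule spanned by $V_n \sqcup \overline{W}_n$ and carries the submodule spanned by $W_n$ into the submodule spanned by $\overline{W}_{n-1}$, diagonally through the bijection $\bar{~}$. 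Writing $A_n = A_n^{V} \oplus A_n^{W} \oplus A_n^{\overline{W}}$ for the submodules generated by the three parts of the basis, one sees that $(A,d)$ is the internal direct sum, over all $v \in \bigsqcup_n V_n$ and all $w \in \bigsqcup_n W_n$, of the subcomplexes $0 \to \Fx v \to 0$ and $0 \to \Fx w \xrightarrow{x^{m_w}} \Fx \bar{w} \to 0$. Matching gradings — $v$ sits in homological degree $n$ and polynomial degree $\deg v$, while $d(w) = x^{m_w}\bar{w}$ forces $\deg w = \deg \bar{w} + m_w$ — identifies the first with $U_{n,\deg v,\infty}$ and the second with $U_{n,\deg\bar{w},m_w}$, where $n$ is the homological degree of $\bar{w}$. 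The pairs with $m_w = 0$ produce contractible summands $0 \to \Fx \xrightarrow{\mathrm{id}} \Fx \to 0$; as a direct sum of contracting homotopies is again a contracting homotopy, deleting these gives a chain homotopy equivalence between $A$ and the direct sum of the remaining summands, each of type \eqref{eq:UF} or, with $m_w \ge 1$ now, of type \eqref{eq:UT}. This is the first half of the corollary.

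For the indexed form, fix $n$ and look at the multiset of labels $(s,m)$ of the summands $U_{n,s,m}$ just produced: each $v \in V_n$ contributes $(\deg v, \infty)$ and each $\bar{w} \in \overline{W}_n$ with $m_w \ge 1$ contributes $(\deg \bar{w}, m_w)$. Because $V_n$ and $\overline{W}_n$ lie in a homogeneous basis of $A_n$, and $\dim_\F A_n^s$ equals the number of basis elements of degree $\le s$ — finite, by local finite dimensionality, with the degrees bounded below — this multiset is bounded below in $s$ and has finite fibers over each $s$. Ordering $\Z \times \infZ$ lexicographically with $\infty$ largest, it is therefore order-isomorphic to $\{1,\dots,K_n\}$ for a unique $K_n \in \infZ$, which yields the sequence $\{(s_n(i),m_n(i))\}$ with the asserted monotonicity and the claimed direct-sum decomposition of $A$. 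Finally, since homology commutes with direct sums, $H(A) \cong \bigoplus_n\bigoplus_i H(U_{n,s_n(i),m_n(i)})$; and $H(U_{n,s,\infty}) \cong \fxc[n][s]$ in homological degree $n$, while for $0 < m < \infty$ injectivity of multiplication by $x^m$ kills homology in degree $n+1$ and leaves the cokernel $\Fx/(x^m)||n||\{s\}$ in degree $n$.

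Essentially everything hard is already inside Proposition \ref{ThmDec}; the only genuine points to get right in this corollary are the torsion-freeness argument giving $d(\bar{w}) = 0$ — without it the three parts of the basis need not span subcomplexes — and the enumeration turning the multiset of labels into a well-ordered sequence, for which the local finite dimensionality and the lower bound on the grading are exactly what is needed. I do not expect a real obstacle beyond bookkeeping the grading shifts ($\deg w = \deg\bar{w} + m_w$, and the $\{s\}$ shift descending to the cokernel) accurately.
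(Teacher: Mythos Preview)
Your proposal is correct and takes essentially the same approach as the paper: split $(A,d)$ along the basis of Proposition \ref{ThmDec} into rank-one and rank-two pieces, then contract the $m_w=0$ summands via an explicit null-homotopy. You are in fact more careful than the paper on one point---the paper simply asserts that each pair $(w,\bar{w})$ spans a subcomplex, while you supply the torsion-freeness argument for $d(\bar{w})=0$---and you also spell out the enumeration and the homology computations that the paper leaves to the reader.
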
 
 
\begin{proof}
Let $V_n \sqcup W_n \sqcup \overline{W}_n$ be the basis of $A_n$ from Proposition \ref{ThmDec}.
Note that each $v \in V_n$ generates a subcomplex $\U $ and each pair of $(w,\bar{w}) \in W_{n+1} \times W_n$ generate a subcomplex $\U[m] $. 
So it suffices to show that 
\[U_{n,s,0} \ = \  0 \to \fxc[n+1][s] \xrightarrow{1} \fxc[n][s] \to 0 \]
is homotopic to zero. Consider the identity map $id$ and the chain homotopy given by $h=1 \cdot$ in the diagram
\[ \begin{tikzcd}[column sep = large]
0 \ar[r] & \Fx \ar[r,"1"] & \Fx \ar[r] & 0 \\
0 \ar[r] \ar[ur, leftarrow,"0"]  & \Fx \ar[r,"1"] \ar[u,leftarrow,"id"] \ar[ur, leftarrow,"1"]  & \Fx \ar[r] \ar[u,leftarrow,"id"] \ar[ur, leftarrow,"0"] & 0 
\end{tikzcd}\]
Here $h \circ d + d \circ h = id$. Hence $id$ is null-homotopic and 
\[U_{n,s,0} \ = \  0 \to \fxc[n+1][s] \xrightarrow{1} \fxc[n][s] \to 0 \]
is homotopic to zero when.
\end{proof}

\subsection{Uniqueness of the decomposition}
The following lemma is a generalization of \cite[Lemma 4.14]{EqKR}.

\begin{lem} \label{PHD-Uni}
Recall that $\infZ = \mathbb{Z}_+ \cup \{+\infty\}$. Let $K, K' \in \infZ $ and $n \in \Z$. Suppose there are two sequences $\{(s(i),m(i))\}_{i=1}^{K}$ and  $ \{(s'(i),m'(i))\}_{i=1}^{K'}$  in $ \mathbb{Z} \times \infZ $  satisfying:
\begin{itemize}
\item If $i < j$, then $s(i) \le s(j)$ and $s'(i) \le s'(j)$.
\item If $i < j$ and $s(i) = s(j)$, then $m(i) \le m(j)$.
\item If $i < j$ and $s'(i) = s'(j)$, then $m'(i) \le m'(j)$.
\item For every $s \in \Z$, index sets $\{i \ | \ s(i) = s\}$ and $\{i \ | \ s'(i) = s\}$ are finite.
\end{itemize}
If
$$\bigoplus_{i=1}^K H(U_{n,s(i),m(i)}) \cong \bigoplus_{j=1}^{K'} H(U_{n,s'(j),m'(j)})$$ 
as graded $\Fx $-modules, then $K = K'$ and $s(i) = s'(i), m(i) = m'(i)$ for every $1 \le i \le K$.
\end{lem}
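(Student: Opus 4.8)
The plan is to extract from the isomorphism class of the graded $\Fx$-module $M := \bigoplus_{i=1}^K H(U_{n,s(i),m(i)})$ enough invariants to recover, for every pair $(s,m) \in \Z \times \infZ$, the multiplicity $\nu_{s,m} := |\{i : 1 \le i \le K,\ s(i) = s,\ m(i) = m\}|$. Since the hypotheses also apply to the second sequence and $M \cong \bigoplus_{j=1}^{K'} H(U_{n,s'(j),m'(j)})$, the two sequences will then have identical multiplicity functions; and the monotonicity hypotheses say exactly that each of the two sequences is the unique enumeration of its multiset of pairs in which $s(i)$ is non-decreasing and, within a block of constant $s(i)$, $m(i)$ is non-decreasing. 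Hence $K = K'$ (including the case $+\infty$, the finiteness of each $\{i : s(i) = s\}$ guaranteeing the enumeration exhausts the multiset) and $s(i) = s'(i)$, $m(i) = m'(i)$ for all $i$. Throughout, $n$ is fixed and common to both sides, so everything happens in a single homological degree and it suffices to work with graded $\Fx$-modules: $H(U_{n,s,\infty}) \cong \Fx\{s\}$ and $H(U_{n,s,m}) \cong (\Fx/(x^m))\{s\}$ for $1 \le m < \infty$. By the finiteness of the index sets $\{i : s(i) = s\}$ together with $s(i) \le s(i+1)$, the module $M$ is locally finite dimensional over $\F$, so every $\F$-dimension appearing below is finite.

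To read off the free summands, consider $M/xM$. Multiplication by $x^k$, taking kernels, and taking quotients all commute with direct sums, so $M/xM \cong \bigoplus_{i=1}^K N_i/xN_i$ where $N_i := H(U_{n,s(i),m(i)})$; and whether $m(i)$ is finite or infinite, $N_i/xN_i$ is one-dimensional, concentrated in degree $s(i)$. Therefore
$$\dim_\F (M/xM)^d = |\{i : s(i) = d\}| = \sum_{m \in \infZ} \nu_{d,m}.$$

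To separate the finite-$m$ summands, I would use the following socle-type invariants. For $k \ge 0$ set $S_k(M) := x^k M \cap \ker(x_M)$, where $x_M : M \to M$ denotes multiplication by $x$; these are graded $\F$-subspaces forming a descending chain $S_0(M) \supseteq S_1(M) \supseteq \cdots$, and each $S_k(M)$ is intrinsic to the graded $\Fx$-module structure of $M$ (the intersection of the image of the endomorphism $x^k$ with the kernel of the endomorphism $x$), hence preserved by any graded $\Fx$-isomorphism. Since $\ker(x_M)$ meets the free part trivially and all these operations commute with direct sums, $S_k(M) = \bigoplus_{i} \bigl( x^k N_i \cap \ker(x_{N_i}) \bigr)$. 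A direct computation on a cyclic summand $N = (\Fx/(x^m))\{s\}$ shows that $\ker(x_N) = \F x^{m-1}$ is one-dimensional in degree $s + m - 1$, and $x^k N \cap \ker(x_N)$ equals $\ker(x_N)$ for $k \le m-1$ and is $0$ for $k \ge m$; while a free summand contributes $0$ to every $S_k(M)$. Hence $N_i$ contributes to $S_k(M)/S_{k+1}(M)$ exactly when $m(i) = k+1$, and in that case it contributes a one-dimensional space in degree $s(i) + k$. Consequently
$$\dim_\F \bigl( S_{m-1}(M)/S_m(M) \bigr)^{\,s+m-1} = \nu_{s,m} \qquad (1 \le m < \infty,\ s \in \Z),$$
which recovers every finite-$m$ multiplicity, and then $\nu_{s,\infty} = \dim_\F (M/xM)^s - \sum_{m=1}^\infty \dim_\F \bigl( S_{m-1}(M)/S_m(M) \bigr)^{\,s+m-1}$, a finite sum since it is dominated by $\dim_\F(M/xM)^s$. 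This determines all $\nu_{s,m}$ and, by the first paragraph, finishes the proof.

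I expect the only genuine obstacle to be the bookkeeping: verifying that $M/xM$ and $S_k(M) = x^k M \cap \ker(x_M)$ really do commute with the (possibly countably infinite) direct sum, that the free part contributes nothing to any $S_k(M)$, and that the resulting graded dimensions are finite and assemble as claimed. The single-summand computations and the combinatorial inversion $\nu \mapsto \{(s(i),m(i))\}$ are then routine; one can alternatively view the whole statement as the graded refinement of the structure theorem for modules over the PID $\Fx$, extended to countably generated modules under the local-finiteness hypothesis, which is essentially what the argument above makes explicit.
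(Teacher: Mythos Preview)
Your proof is correct. Both your argument and the paper's extract graded numerical invariants of $M$ that commute with the (possibly infinite) direct sum and then compare, but they use different families of invariants. The paper works with the single family $M^{(l)} := x^{l}M/x^{l+1}M$: the coefficient of $t^{c+l}$ in $\gdim_{\F} M^{(l)}$ is $|\{i : s(i)=c,\ m(i)>l\}|$, and successive differences in $l$ (together with the eventual stabilisation as $l\to\infty$) recover every $\nu_{c,m}$, free and torsion alike, in one stroke. You instead isolate the torsion via the socle-type subspaces $S_k(M)=x^{k}M\cap\ker(x_M)$ and then obtain $\nu_{s,\infty}$ afterwards by subtracting the finite-$m$ contributions from $\dim_{\F}(M/xM)^s$. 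The paper's route is slightly more economical, treating free and torsion summands uniformly with a single quotient construction; your route makes the free/torsion dichotomy and the finiteness of the subtraction more explicit. The underlying verification that these constructions commute with countable direct sums, and the final combinatorial step recovering the ordered sequence from the multiplicities $\nu_{s,m}$, are the same in both.
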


\begin{proof}
For a graded $\F $-space $V = \bigoplus_{i \in \Z} V_i$, we consider its graded dimension as the Hilbert series, that is,  $\gdim_\F V := \sum_{i \in \Z} (\dim_\F V_i) t^i$. Denote $$M := \bigoplus_{i=1}^K H(U_{n,s(i),m(i)}) \cong \bigoplus_{j=1}^{K'} H(U_{n,s'(j),m'(j)}).$$ From Corollary \ref{UDec}, we have $H(U_{n,s(i),\infty}) \cong \fxc[n][s(i)] $ and $H(U_{n,s(i),m(i)}) \cong \Fx/(x^{m(i)})||n||\{s(i)\}$ for $m(i) < \infty$. Then graded dimension is well-defined on $M$ since the $\Fx $-grading on $M$ is bounded below and for each fixed $s \in \Z$ the index set is finite.

Let $w_i$ be the generator of $H(U_{n,s(i),m(i)})$ which is homogeneous of degree $s(i)$. For any $l \ge 0$, define $M^{(l)} = x^lM / x^{l+1}M$. Then $M^{(l)}$ is an $\F $-space with homogeneous basis $\{x^lw_i\}_{i \in I_l}$. Here $\{(s(i),m(i))\}_{i \in I_l}$ is a subsequence of $\{(s(i),m(i))\}_{i=1}^{K}$ depending on the choice of $l$, where $m(i) > l$ for all $i \in I_l$ in the subsequence. Consider the graded dimension of $M^{(l)}$. Then $\gdim_\F M^{(l)} =t^l \sum_{i \in I_l} t^{s(i)} = \sum_{i \in I_l} t^{s(i)+l}$. Define $S_{c,l} := \{i \in \Z \ | \ s(i)=c,\ m(i) > l \}$ and similarly $S'_{c,l} := \{i \in \Z \ | \ s'(i)=c,\ m'(i) > l \}$. Then $S_{c,l} \subset I_l$ and the cardinality of $S_{c,l}$ is the coefficient of the $t^{c+l}$ term in $\gdim_\F M^{(l)}$. Therefore each $S_{c,l}$ is finite according to locally finiteness and this coefficient of the $t^{c+l}$ in $\gdim_\F M^{(l)}$ is also the cardinality of $S'_{c,l}$. Thus $S_{c,l}$ and $S'_{c,l}$ have the same (finite) cardinality for each $c$ and $l$. Hence the lemma follows.
\end{proof}

We are now ready to prove Theorem \ref{Decomp}.

\begin{proof}[Proof of Theorem \ref{Decomp}]
The $\Fx $-grading of $PC_n$ is bounded below since the filtration of $C_n$ is bounded below, and $PC$ is locally finite dimensional since $(C,\flt)$ is locally finite dimensional. 
Therefore $(PC,d_x)$ satisfies all the conditions in Proposition \ref{ThmDec}. 
The existence of Decompositions \eqref{eq:DecPC} and \eqref{eq:DecPH} follows from Corollary \ref{UDec}.
Since $PC$ is locally finite dimensional, we know that $\{i \ | \ s(i) = s\}$ is finite for all $s \in \Z$ in the decomposition \eqref{eq:DecPH}. 
Thus, Lemma \ref{PHD-Uni} implies that Decompositions \eqref{eq:DecPC} and \eqref{eq:DecPH} are unique up to chain homotopy and permutation of factors.
\end{proof}

\section{Spectral Sequences}

There are several approaches to compute the spectral sequence of filtered chain complexes. We will use \textit{exact couples} to compute the spectral sequence of summands $\U[m]$ and $\U$. Properties of exact couples can be found in many books such as \cite{Guide2SS} and \cite{Intro2HA}. Let us first briefly review this approach.

\begin{defn}\cite{Guide2SS} \label{def-ex-cp}
An exact couple $\mathcal{C}=(A,E,f,g,h)$ consists of two graded $\F$-linear spaces $A$, $E$ and homogeneous homomorphisms $f$, $g$ and $h$ where the following diagram
\begin{center}
\begin{tikzcd}
A \arrow[rr, "f"] & & A \arrow[ld, "g"] \\
 & E \arrow[lu, "h"] &
\end{tikzcd}
\end{center}
is exact. The differential map $d$ on $E$ is defined to be $d = g \circ h$.

The derived exact couple $\mathcal{C}'=(A,E,f,g,h)'=(A',E',f',g',h')$ is defined by
\begin{itemize}
\item $A' = f(A)$,
\item $E' = \ker d / \im d=H(E,d)$,
\item $f' = f|_{A'}$,
\item $g'(f(a))=g(a)+d(E)$ for any $a \in A$,
\item $h'(b+d(E))=h(b)$ for any $b \in \ker d$.
\end{itemize}

\end{defn}

\begin{defn} \cite{EqKR} \label{def-calH}
For a graded chain complex $K$ of free $\Fx$-modules, the short exact sequence $0 \to K\{1\} \xrightarrow{x} K \xrightarrow{\pi_x} K/xK \to 0$ where $\pi_x$ is the standard quotient map. 
It induces an exact couple $\cH^{(1)}(K)=(A^{(1)},E^{(1)}, f^{(1)}, g^{(1)}, h^{(1)})$ given by 

\begin{center}
\begin{tikzcd}
H(K) \arrow[rr, "x"] & & H(K) \arrow[ld, "(\pi_x)_*"] \\
 & H(K/xK) \arrow[lu, "\Delta"] &
\end{tikzcd}
\end{center}
Denote by $\cH^{(r)}(K)$ the exact couple of $\cH^{(1)}(K)$ derived $(r-1)$ times.

\end{defn}

\begin{lem} \label{SS}
\cite{EqKR}
As graded $\F$-linear spaces,
$$\begin{aligned}
E^{(r)}(\U) & ~~~~\cong~~~~  \fc[n][s]  ~~~~~\forall r \ge 1, \\
E^{(r)}(\U[m]) & ~~~~\cong~~~~  \left\{ \begin{array}{lc}
\fc[n+1][s+m] \oplus \fc[n][s] & 1 \le r \le m, \\
0 & r > m.
\end{array} \right.\\
\end{aligned}$$

In particular, $E^{(\infty)}(\U) \cong E^{(1)}(\U)$ and $E^{(\infty)}(\U[m]) \cong E^{(m+1)}(\U[m])$.

\end{lem}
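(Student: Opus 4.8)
The plan is to compute the spectral sequences of the two model complexes $\U$ and $\U[m]$ directly via the exact couple $\cH^{(r)}$ of Definition \ref{def-calH}, and then track what happens under iterated derivation. First I would handle $\U$, which is the complex concentrated in homological degree $n$ with the single free module $\fxc[n][s]$ and zero differential. Here $H(\U) = \fxc[n][s]$, and $H(\U/x\U) = \fc[n][s]$. The map $x : H(\U) \to H(\U)$ is injective multiplication by $x$, so in the exact couple the connecting map $\Delta$ is zero and $(\pi_x)_*$ is surjective with kernel $xH(\U)$; hence $E^{(1)}(\U) = H(\U/x\U) \cong \fc[n][s]$, and the differential $d = g \circ h = (\pi_x)_* \circ \Delta$ on $E^{(1)}$ vanishes. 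So $E^{(2)}(\U) = H(E^{(1)}, 0) = E^{(1)}(\U)$, and by induction every page equals $\fc[n][s]$, giving $E^{(r)}(\U) \cong \fc[n][s]$ for all $r \ge 1$ and $E^{(\infty)}(\U) \cong E^{(1)}(\U)$.

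Next I would treat $\U[m]$ for $0 < m < \infty$: the two-term complex $\fxc[n+1][s+m] \xrightarrow{x^m} \fxc[n][s]$. Since $x^m$ is injective on a free module, $H_{n+1}(\U[m]) = 0$ and $H_n(\U[m]) = \fxc[n][s]/(x^m) \cong \Fx/(x^m)\,||n||\{s\}$, a module annihilated by $x^m$. The quotient complex $\U[m]/x\U[m]$ has zero differential (reduction of $x^m$ is $0$ since $m \ge 1$), so $H(\U[m]/x\U[m]) \cong \fc[n+1][s+m] \oplus \fc[n][s]$. I would then analyze the exact couple: $x$ acts as a nilpotent operator on $H(\U[m])$ with $x^m = 0$ but $x^{m-1} \ne 0$ (on the degree-$n$ part). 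The key computation is to show the $r$-th page stabilizes in size until page $m$ and then dies. The cleanest route is to observe that $E^{(1)}(\U[m])$ has dimension $2$ (one class in homological degree $n+1$, one in degree $n$), that the differential $d^{(r)}$ on page $r$ is nonzero precisely when it can "see" the length-$m$ staircase, and that it becomes nonzero exactly at page $r = m$, killing both classes so $E^{(m+1)}(\U[m]) = 0$ and hence $E^{(r)}(\U[m]) = 0$ for all $r > m$. For $1 \le r \le m$ the derived couples do not yet produce a nonzero differential, so $E^{(r)}(\U[m]) \cong E^{(1)}(\U[m]) \cong \fc[n+1][s+m] \oplus \fc[n][s]$.

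Concretely, for $\U[m]$ I would make the exact couple explicit: $A^{(1)} = H(\U[m])$ sits only in homological degree $n$ and equals $\Fx/(x^m)$ up to shift, $E^{(1)} = H(\U[m]/x\U[m])$ has a generator $\bar a$ in degree $n+1$ (the class of the generator of $\fxc[n+1][s+m]$) and a generator $\bar b$ in degree $n$. The connecting map $\Delta : E^{(1)} \to A^{(1)}$ sends $\bar a \mapsto$ (class of $x^m$ times generator) $= 0$ in $H(\U[m])$ since $x^m$ bounds, while more precisely $\Delta \bar a$ is the class of the generator $b$ of the bottom module, which is nonzero in $H(\U[m])$; and $(\pi_x)_* : A^{(1)} \to E^{(1)}$ sends the class of $b$ to $\bar b$. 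Iterating the derivation $r-1$ times replaces $A$ by $x^{r-1}H(\U[m])$ and $E$ by the homology of the previous page; one checks $d^{(r)} = g^{(r)} h^{(r)}$ is zero for $r < m$ because $h^{(r)}(\bar a)$ lands in $x^{r-1}H(\U[m])$ but $g^{(r)}$ of that is the class of $x^{r-1} b$ modulo $d$-image, which only becomes a boundary/relevant obstruction at $r = m$; at $r = m$, $h^{(m)}(\bar a)$ is (a scalar times) the socle element $x^{m-1}b \in x^{m-1}H(\U[m])$, and $g^{(m)}$ carries it to $\bar b$ up to $\im d$, so $d^{(m)}\bar a = \bar b \ne 0$, whence $E^{(m+1)} = 0$. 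The main obstacle is bookkeeping the degree shifts and the identification of $g^{(r)}, h^{(r)}$ after repeated derivation carefully enough to pin down the exact page at which $d^{(r)}$ switches on; this is essentially the content of \cite[Lemma 4.14]{EqKR} and the reference to \cite{EqKR} in the statement of Lemma \ref{SS} suggests the detailed verification can be quoted from there rather than redone. Finally, the "in particular" clauses are immediate: $E^{(\infty)}(\U) \cong E^{(1)}(\U)$ since all pages agree, and $E^{(\infty)}(\U[m]) \cong E^{(m+1)}(\U[m]) = 0$ since the spectral sequence of each model complex degenerates after finitely many pages.
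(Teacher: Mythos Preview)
Your approach is the same as the paper's: compute the derived exact couples $\cH^{(r)}$ for each model complex directly. Your treatment of $\U$ is correct and matches the paper exactly.

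For $\U[m]$ there is a slip in your computation of the connecting map that, if carried through, would break the argument. You first write $\Delta\bar a=$ ``class of $x^m$ times generator $=0$'' and then ``more precisely $\Delta\bar a$ is the class of the generator $b$.'' Neither is right. The snake-lemma recipe for $0\to K\{1\}\xrightarrow{x}K\to K/xK\to 0$ is: lift $\bar a$ to $a$, apply $d$ to get $x^m b$, and then \emph{divide by $x$}, so $\Delta\bar a=[x^{m-1}b]$ in $H(\U[m])\cong\Fx/(x^m)$. This is precisely the paper's ``$\Delta(1)=x^{m-1}$''. The distinction is not cosmetic: if $\Delta\bar a=[b]$ then already $d^{(1)}\bar a=(\pi_x)_*([b])=\bar b\neq 0$ and $E^{(2)}(\U[m])=0$ for every $m\ge 1$, contradicting the statement for $m>1$. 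With the correct value $x^{m-1}[b]$, the paper's induction gives $\cH^{(r)}(\U[m])\cong\bigl(x^{r-1}H(\U[m]),\,E^{(1)},\,x,\,\pi_x\circ x^{1-r},\,x^{m-1}\bigr)$ for $1\le r\le m$, whence $d^{(r)}=\pi_x\circ x^{m-r}$ vanishes for $r<m$ and is an isomorphism at $r=m$. You do eventually write the correct socle element $x^{m-1}b$ in your final paragraph, so the fix is just to make the earlier computation consistent with it. (A minor point: the relevant lemma in \cite{EqKR} is 4.17, not 4.14.)
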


\begin{proof} 
(Following \cite[Lemma 4.17]{EqKR}.)
First consider $\cH(\U)$. Since the only nonzero term of $\U$ is located at the $n$-th homological degree, the connecting homomorphism $\Delta=h^{(1)}$ is always zero. 
Thus $d^{(r)}=0$ on $E^{(r)}$ for all $r \ge 1$. Therefore $E^{(r)} \cong E^{(1)} \cong \Fx/x\fxc[n][s] \cong \fc[n][s] $ for all $r \ge 1$.

Next consider $\cH(\U[m])$. For $1 \le r < m$, $x^r \cdot \Fx/(x^m) = \mathrm{Span} \{ x^r, x^{r+1}, \ldots , x^{m-1} \} $ is a subspace of $\Fx/(x^m)$. Define the $\F$-linear mapping $x^{-r}$ by
$$x^{-r}:~~~x^r \cdot \Fx/(x^m) \to \Fx/(x^m),~~~~x^{r+i} \mapsto x^i~~~0 \le i \le m-r-1.$$
We will show by induction that for all $1 \le r \le m$:
$$\cH^{(r)}(\U[m]) \cong (x^{r-1} \cdot H(\U[m]), H(\U[m]/x\U[m]), x, \pi_x \circ x^{1-r}, x^{m-1}).$$

For $r=1$, we have 
$$A^{(1)} = H(\U[m]) \cong \Fx /(x^m)||n||\{s\}$$
and
$$E^{(1)} = H(\U[m]/x\U[m]) \cong \fc[n+1][s+m] \oplus \fc[n][s] . $$ 
The nonzero part of $\cH^{(1)}(\U[m])$ is the following part:
\begin{center}
\begin{tikzcd}
\U[m] \{ 1 \}   &  \U[m]   &  \U[m]/x\U[m]  & \\
  & 0 \arrow[r]  & \F\{s+m\}     & {||n+1||}\\
\Fx/(x^m)\{s+1\} \arrow[r,"x"] \arrow[urr, leftarrow, "\Delta"] & \Fx/(x^m)\{s\} \arrow[r,"\pi_x"] & \F\{s\}  & {||n||} \\
0 \arrow[urr, leftarrow] & & & {||n-1||}
\end{tikzcd}
\end{center}
where $\Delta(1)=x^{m-1}$. Thus $\cH^{(1)}(\U[m])=(H(\U[m]),H(\U[m]/x\U[m]),x,\pi_x,\Delta)$.

Assume the inductive hypothesis for $r \le k-1 < m$, and compute $\cH^{(k)}(\U[m])$ by considering the derived exact couple $(x^{k-2} \cdot H(\U[m]), H(\U[m]/x\U[m]), x, \pi_x \circ x^{2-k}, x^{m-1})'$. Then $A^{(k)}=x \cdot A^{(k-1)} \cong x^{k-1} \cdot H(\U[m])$. From the diagram above we may still find that $f^{(k)}=x$ and $h^{(k)}=\Delta=x^{m-1}$. For any $\alpha \in A^{(k-1)}$, we have 
$$g^{(k-1)}(\alpha) = \pi_x \circ x^{2-k}(\alpha) = \pi_x \circ x^{1-k}(x\alpha) = (\pi_x \circ x^{1-k})(f^{(k-1)}(\alpha)).$$
Therefore $g^{(k)}(\alpha) = \pi_x \circ x^{1-k}(\alpha)$. Note that $d^{(k-1)}= g^{(k-1)} \circ h^{(k-1)} = (\pi_x \circ x^{2-k}) \circ x^{m-1}=\pi_x \circ x^{m-k+1}=0$, thus $E^{(k)} \cong E^{(k-1)} \cong H(\U[m]/x\U[m]) \cong \fc[n+1][s+m] \oplus \fc[n][s] $.

Finally consider $r=m+1$. The differential map $d^{(m)}$ becomes $d^{(m)} = g^{(m)} \circ h^{(m)} = (\pi_x \circ x^{1-m}) \circ x^{m-1} = \pi_x \circ 1 = i$ which is an isomorphism on $\fc[n+1][s+m] \oplus \fc[n][s] $, then $E^{(m+1)} = H(E^{(m)},i) \cong 0$. Hence $E^{(r)} \cong E^{(m+1)} \cong 0 $ for all $r > m$. 
\end{proof}

Under our assumptions, the spectral sequence $\{ E^{(r)} \}$ does not necessarily collapse in the usual sense. 
However, from Lemma \ref{SS}, each $E^{(r)}(\U)$ and $E^{(r)}(\U[m])$ both collapse after finite pages. Therefore, we have the following corollary.

\begin{cor} \label{SS-LocCol}
Let $\{E^{(r)}\}$ be the spectral sequence in Theorem \ref{SS-Decomp}.
Then $\{E^{(r)}\}$ collapses locally to $E^{(\infty)}$, which means $\{E_{n,s}^{(r)}\}$ collapses to $E_{n,s}^{(\infty)}$ for every $(n, s) \in \Z \times \Z$.
\end{cor}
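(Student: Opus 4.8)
The plan is to reduce Corollary \ref{SS-LocCol} to Lemma \ref{SS} via the decomposition established in Theorem \ref{Decomp} (equivalently Corollary \ref{UDec}). First I would invoke Theorem \ref{Decomp} to write $PC \simeq \bigoplus_n \bigoplus_{i=1}^{K_n} U_{n,s_n(i),m_n(i)}$, and note that taking the spectral sequence commutes with direct sums: since the exact couple $\cH^{(1)}(K)$ of Definition \ref{def-calH} is built functorially from the short exact sequence $0 \to K\{1\} \xrightarrow{x} K \xrightarrow{\pi_x} K/xK \to 0$, and homology commutes with direct sums, we get $E^{(r)}(PC) \cong \bigoplus_n \bigoplus_{i=1}^{K_n} E^{(r)}(U_{n,s_n(i),m_n(i)})$ for every $r \ge 1$; moreover chain homotopy equivalences induce isomorphisms of the associated spectral sequences, so this is unaffected by the ``$\simeq$'' rather than ``$=$''. (This is exactly the content of Theorem \ref{SS-Decomp}, which I may assume.)

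Next I would fix a bidegree $(n,s) \in \Z \times \Z$ and examine the $(n,s)$-homogeneous component $E_{n,s}^{(r)}$ of the decomposition. Each summand $U_{n',s',m}$ contributes to the bidegree $(n,s)$ only when $n' \in \{n, n-1\}$ and $s'$ (resp. $s'+m$) equals $s$; by Lemma \ref{SS}, $E_{n,s}^{(r)}(U_{n',s',m})$ is a fixed finite-dimensional space for $1 \le r \le m$ and $0$ for $r > m$, while $E_{n,s}^{(r)}(U_{n',s',\infty})$ is constant in $r$. The local finiteness of $PC$ (built into the hypotheses of Theorem \ref{SS-Decomp}, inherited from $(C,\flt)$) guarantees that only finitely many indices $i$ with $s_n(i) = s$ and finitely many with $s_{n-1}(i) = s - m_n(i)$ contribute, so the sum defining $E_{n,s}^{(r)}$ is finite. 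Let $m^\ast$ be the maximum of the finitely many finite values $m$ appearing among these contributing summands (and $m^\ast = 0$ if there are none). Then for all $r > m^\ast$ every torsion summand $U_{n',s',m}$ with $m < \infty$ contributes $0$ in bidegree $(n,s)$, and the free summands $U_{n',s',\infty}$ contribute a value independent of $r$; hence $E_{n,s}^{(r)} \cong E_{n,s}^{(\infty)}$ for all $r > m^\ast$. Taking $r_{n,s} = m^\ast$ gives precisely Definition \ref{def-loc-col}.

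I expect the only real subtlety — not an obstacle so much as a point requiring a sentence of care — to be the commutation of the spectral sequence construction with the (possibly infinite) direct sum and its compatibility with chain homotopy equivalence; once that is granted, the rest is bookkeeping with the explicit formulas of Lemma \ref{SS} together with local finiteness. I would therefore organize the write-up as: (1) cite Theorem \ref{SS-Decomp} for the decomposition of $\{E^{(r)}\}$; (2) restrict to bidegree $(n,s)$ and use local finiteness to see the relevant index set is finite; (3) apply Lemma \ref{SS} to conclude each summand stabilizes, and take $r_{n,s}$ to be the largest finite $m$ occurring.
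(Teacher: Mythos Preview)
Your proposal is correct and is essentially an expanded version of the paper's own argument, which is just the one sentence preceding the corollary: invoke the decomposition (Theorem \ref{Decomp}) and then Lemma \ref{SS} to see that each summand stabilizes after finitely many pages. Your extra care in checking that only finitely many summands contribute in a fixed bidegree (using both local finiteness and bounded-below) and in deriving the decomposition of $E^{(r)}$ directly from Theorem \ref{Decomp} rather than citing all of Theorem \ref{SS-Decomp} (whose $r=\infty$ case in turn cites this corollary) is appropriate, but the route is the same.
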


We have another observation from Lemma \ref{SS}.

\begin{cor}
Let $K$ be a chain complex of free graded $\mathbb{F}[x]$-module satisfying:
\begin{itemize}
    \item Its differential $d$ is homogeneous and preserves the $\Fx $-grading of $K$;
    \item For each $n$, the $\mathbb{F}[x]$-grading of $K_n$ is bounded below;
    \item For each $n$, $K_n$ is locally finite dimensional over $\F$.
    \end{itemize} 
Then we have that
$$E^{(1)}(K) \cong H(K/xK)$$
and that $\{E^{(r)}(K)\}$ collapses locally to $E^{(\infty)}(K)$ where
$$E^{(\infty)}(K) \cong H(K/(x-1)K).$$
In particular, assume that $K$ is the persistent chain complex of a filtered chain complex $(C,d, \flt)$ over $\F$ satisfying:
\begin{itemize}
    \item[(a)] For each $n$, the filtration $\flt$ on $C_n$ is bounded below;
    \item[(b)] $(C,d,\flt)$ is locally finite dimensional over $\F$.
    \end{itemize}
Then $K/xK$ is isomorphic to the graded chain complex associated to $(C,d, \flt)$, and $K/(x-1)K$ is isomorphic to the filtered chain complex $(C, d, \flt)$.
\end{cor}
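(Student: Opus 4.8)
The plan is to deduce every part of this statement from the decomposition of $K$ into the elementary summands $\U$ and $\U[m]$, so that each claim becomes a direct consequence of Lemma \ref{SS}. The first isomorphism requires no work at all: by Definition \ref{def-calH}, $E^{(1)}(K)$ is, by construction, the $E$-term of the exact couple $\cH^{(1)}(K)$ attached to the short exact sequence $0\to K\{1\}\xrightarrow{x}K\xrightarrow{\pi_x}K/xK\to 0$, and that term is $H(K/xK)$.

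For the remaining assertions I would first apply Corollary \ref{UDec}: the hypotheses imposed on $K$ are exactly those of Proposition \ref{ThmDec}, so there is an $\Fx$-linear chain homotopy equivalence $K\simeq\bigoplus_n\bigoplus_{i=1}^{K_n}U_{n,s_n(i),m_n(i)}$. As in the proof of Theorem \ref{SS-Decomp}, this equivalence induces an isomorphism of spectral sequences from the first page on, and the exact-couple construction of Definition \ref{def-calH} is additive over direct sums (homology, the connecting maps, and the passage to a derived couple all commute with direct sums); hence $E^{(r)}(K)\cong\bigoplus_n\bigoplus_i E^{(r)}(U_{n,s_n(i),m_n(i)})$ for every $r$. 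Lemma \ref{SS} then tells us that $E^{(r)}(U_{n',s',\infty})$ is a single copy of $\F$ in homological degree $n'$ and filtration degree $s'$ for all $r$, while for $m'<\infty$ the page $E^{(r)}(U_{n',s',m'})$ is supported in homological degrees $n'$ and $n'+1$, is a fixed finite-dimensional space for $1\le r\le m'$, and vanishes for $r>m'$. Fixing a bidegree $(n,s)$, the only summands of $K$ that can contribute to $E^{(r)}_{n,s}(K)$ are the $U_{n,s,m}$ (for $m\in\infZ$) and the $U_{n-1,s-m,m}$ (for $m<\infty$); by the local finite dimensionality of $K$ together with the bounded-below hypothesis, only finitely many of these have $m$ finite, say all with $m<M$. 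Consequently $E^{(r)}_{n,s}(K)$ is independent of $r$ for $r\ge M$ --- which is precisely the statement that $\{E^{(r)}(K)\}$ collapses locally to $E^{(\infty)}(K)$, as in Corollary \ref{SS-LocCol} --- and its stable value is assembled only from the $U_{n,s,\infty}$ pieces, so $E^{(\infty)}(K)\cong\bigoplus_n\bigoplus_{\{i\,:\,m_n(i)=\infty\}}\F$, with one copy of $\F$ in homological degree $n$ for each index $i$ having $m_n(i)=\infty$.

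To identify this object with $H(K/(x-1)K)$, I would apply $-\tsr_\Fx\Fx/(x-1)$ to the same decomposition: for $m'<\infty$ the complex $U_{n',s',m'}/(x-1)U_{n',s',m'}$ is $0\to\F\xrightarrow{\,1\,}\F\to 0$, which is acyclic, while $U_{n',s',\infty}/(x-1)U_{n',s',\infty}$ is $\F$ concentrated in homological degree $n'$. Thus $H(K/(x-1)K)\cong\bigoplus_n\bigoplus_{\{i\,:\,m_n(i)=\infty\}}\F$, which carries the same homological grading as $E^{(\infty)}(K)$; this is the asserted isomorphism (and the residual filtration grading on $E^{(\infty)}(K)$ is exactly the associated graded of the filtration induced on $H(K/(x-1)K)$). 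For the final ``in particular'', take $K=PC=\bigoplus_p\flt^pC$ with $x$ acting by the inclusions $\flt^pC\hookrightarrow\flt^{p+1}C$. Then $K/xK=\bigoplus_p\flt^pC/\flt^{p-1}C$ with the induced differential is, by definition, the graded chain complex associated to $(C,d,\flt)$; and $K/(x-1)K$ is the standard presentation of the colimit $\varinjlim_p\flt^pC$, so, using that the filtration is exhaustive (built into the setup of Section \ref{sec-PH}, where $C_n=\bigoplus_q G_n^q$ and hence $\bigcup_p\flt^pC_n=C_n$), this colimit is $C$ with differential $d$, i.e.\ $K/(x-1)K\cong(C,d)$.

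The step I expect to be the genuine obstacle is the finiteness bookkeeping in the second paragraph: one must verify that for each bidegree only finitely many of the elementary summands of $K$ contribute to any page $E^{(r)}_{n,s}(K)$, which is exactly the point where both the local finite dimensionality and the bounded-below hypotheses are used (a summand $U_{n',s',m'}$ can contribute to $E^{(r)}_{n,s}(K)$ only when $(n',s')=(n,s)$ or $(n',s')=(n-1,s-m')$, and one needs these possibilities to be finite in number). Everything else is either definitional or an immediate application of Corollary \ref{UDec} and Lemma \ref{SS}.
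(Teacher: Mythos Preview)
Your proposal is correct and follows essentially the same strategy as the paper: invoke the decomposition of Corollary \ref{UDec}, verify the claims summand-by-summand via Lemma \ref{SS} and the direct computation of $U/(x-1)U$, and handle the ``in particular'' by computing $K/xK$ and $K/(x-1)K$ directly from the definition of $PC$. The only presentational difference is that the paper carries out the last identification using the explicit $\PC$ model of Definition \ref{def-pc} (writing $(K/(x-1)K)_n^p=\bigoplus_{q\le p}G_n^q=\flt^pC_n$) rather than your colimit description, but the content is the same.
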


\begin{proof}
From Definitions \ref{def-ex-cp} and \ref{def-calH}, we have that $E^{(1)}(K) \cong H(K/xK)$. 
Since $K$ satisfies all conditions in Proposition \ref{ThmDec}, it admits the decomposition in Corollary \ref{UDec}. 
It remains to show that $$E^{(\infty)}(\U) \cong H(\U / (x-1) \U)$$
and 
$$E^{(\infty)}(\U[m]) \cong H(\U[m] / (x-1) \U[m]),$$
where $\U$ and $\U[m]$ are given in Definition \ref{def-decomp-type}.
From ring isomorphism $\Fx / (x-1) \cong \F$, we have
$$\U /(x-1)\U   \cong    0 \to \fc[n][s] \to 0$$
and
$$\U[m] /(x-1)\U[m]  \cong  0 \to \fc[n+1][s+m] \xrightarrow{1} \fc[n][s] \to 0. $$
Hence $H(\U / (x-1) \U) \cong \fc[n][s] $ and $H(\U[m] / (x-1) \U[m]) \cong 0$.
Comparing these to Lemma \ref{SS}, we know that $E^{(\infty)}(K) \cong H(K/(x-1)K)$.

Let $K$ be the persistent chain complex of a filtered chain complex $(C,d, \flt)$ over $\F$ satisfying conditions (a) and (b).
From Definition \ref{def-flt-complex}, recall that $x$ is a homogeneous element of degree $1$ that act on $K$ as the natural inclusion map $i^p : \flt^p C \hookrightarrow \flt^{p+1} C$ for each $p \in\mathbb{Z}$.
Fix $p \in \Z$, we have $x(\flt^{p-1}C) \subset \flt^p C$.
Thus
$$K/xK \cong \bigoplus_{p \in \Z} \flt^p C / \flt^{p-1} C.$$
From Definition \ref{def-pc} and Proposition \ref{PC-iso}, following their notations, we have the differential map $d_x$ of $K$ is given by $(d_x)_n^p=\sum_{q \le p} x^{p-q}d_n^{p,q}$. 
Then $d_x$ induces a differential map $d_{0*}$ in $K/xK$ given by
$$\bigl( d_{0*} \bigr)_n^p = d_n^{p,p} = \pi_{n-1}^{p,p} \circ d_n^p,$$
where $\pi_{n-1}^{p,p} \circ d_n^p : G_n^p \to G_{n-1}^p$ is the restriction of the differential map $d$ on these $\F$-spaces.
Since each $\F$-space $G_n^p$ is fixed by $\flt^p C_n = \flt^{p-1} C_n \oplus G_n^p$, we have $G_n^p \cong \flt^p C_n / \flt^{p-1} C_n$.
Therefore $K/xK$ is isomorphic to the associated graded chain complex of $C$. 

Notice that $x=1$ in the quotient chain complex $K/(x-1)K$.
We know that the image of the homogeneous component of degree $p$ on homological degree $n$ under quotient map is
$$\Bigl( K/(x-1)K \Bigl)_n^p = \bigoplus_{k \ge 0} 1^k G_n^{p-k} = \bigoplus_{q = -\infty}^p G_n^q = \flt^p C_n.$$
Let $d_{1*}$ be the induced differential map of $K/(x-1)K$, then by Definition \ref{def-pc} we have
$$\bigl( d_{1*} \bigr)_n^p = \sum_{q \le p} 1^{p-q} d_n^{p,q} = d_n^p$$
where $d_n^p$ is the differential map $d_n: C_n \to C_{n-1}$ of $(C,d,\flt)$ restricted on $G_n^p$.
Thus $K/(x-1)K$ is isomorphic to the filtered chain complex $(C,d,\flt)$.
\end{proof}

Now we prove Theorem \ref{SS-Decomp}.

\begin{proof}[Proof of Theorem \ref{SS-Decomp}]
The decomposition follows from Theorem \ref{Decomp}. The structures of $E^{(r)}(\U[m]) $ and $E^{(r)}(\U) $ are given in Lemma \ref{SS}. By Corollary \ref{SS-LocCol}, $\{E^{(r)}\}$ collapses locally.
\end{proof}

\section{Relations Between $E^{(r)}$ and $PH$} \label{sec-SS2PH}

Next, we establish the relation between the spectral sequence and the persistent homology. 
From the discussions in the above sections, we know that as $\F $-spaces, the spectral sequence is determined by the persistent chain complex. 
Thus, it is straight forward to give a formula of the $\F $-dimension of the spectral sequence by counting the relevant components in the persistent homology. 
Moreover, we will further show in this section that the persistent homology can also be recovered from the spectral sequence if the filtration is bounded below uniformly.

\begin{cor} \label{Nu=Mul}
Recall that $\mu_n^{i,j}$ is the \textit{persistent multiplicities} from \cite[VII.1]{ComTop} for finite filtered chain complex, and $\nu_{n,s,n}$ be defined in Definition \ref{def-nu}. Then
\begin{itemize}
\item $\mu_n^{i,j}=\nu_{n,i,j-i}$,
\item $\nu_{n,s,m}=\mu_n^{s,s+m}$.
\end{itemize}
Also, the \textit{persistent Betti number} $b_n^{i,j}$ defined in \cite[VII.1]{ComTop} can be expressed in terms of $\nu_{n,s,m}$ as
$$b_n^{i,j}=\sum_{j-m < s \le i} \nu_{n,s,m}.$$

\end{cor}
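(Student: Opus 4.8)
The plan is to read off both $b_n^{i,j}$ and $\mu_n^{i,j}$ from the interval decomposition of $PH$ in Theorem \ref{Decomp} and compare with the definitions in \cite[VII.1]{ComTop}. Recall that there the persistent Betti number is $b_n^{i,j} = \rank\bigl(H_n(\flt^i C) \to H_n(\flt^j C)\bigr)$ for $i \le j$, the rank of the inclusion-induced map, and that the persistent multiplicity (for $i < j$) is the alternating sum $\mu_n^{i,j} = \bigl(b_n^{i,j-1} - b_n^{i,j}\bigr) - \bigl(b_n^{i-1,j-1} - b_n^{i-1,j}\bigr)$. The conceptual input is that, under the isomorphism $PH_n \cong \bigoplus_k H(U_{n,s_n(k),m_n(k)})$, the homogeneous degree-$p$ part of $PH_n$ is $H_n(\flt^p C)$ and the inclusion $\flt^i C \hookrightarrow \flt^j C$ induces multiplication by $x^{j-i}$; hence
\[
b_n^{i,j} = \rank\bigl(x^{j-i}\colon PH_n^i \to PH_n^j\bigr) = \sum_k \rank\bigl(x^{j-i}\colon H(U_{n,s_n(k),m_n(k)})^i \to H(U_{n,s_n(k),m_n(k)})^j\bigr),
\]
so it suffices to evaluate each of these rank-one-or-zero contributions.

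First I would compute these contributions using $H(\U) \cong \Fx\|n\|\{s\}$ and $H(\U[m]) \cong \Fx/(x^m)\|n\|\{s\}$ from Corollary \ref{UDec}. A free summand $\Fx\|n\|\{s\}$ has one-dimensional degree-$i$ part exactly when $i \ge s$, on which every power of $x$ is injective, so it contributes $1$ to $b_n^{i,j}$ exactly when $s \le i$. A torsion summand $\Fx/(x^m)\|n\|\{s\}$ with $m < \infty$ has one-dimensional degree-$i$ part exactly when $s \le i \le s+m-1$, and $x^{j-i}$ sends it isomorphically onto the degree-$j$ part when $j \le s+m-1$ and to $0$ otherwise; since $i \le j$, it contributes $1$ exactly when $s \le i$ and $j - m < s$, that is, $j - m < s \le i$. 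Summing over all summands, and treating a free summand as the case $m = +\infty$ (for which $j - m = -\infty$, so $j-m<s\le i$ reduces to $s\le i$), gives
\[
b_n^{i,j} = \sum_{j - m < s \le i} \nu_{n,s,m},
\]
which is the last display in the corollary.

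Next I would plug this into the alternating sum for $\mu_n^{i,j}$. For integer indices the sets $\{s : (j-1)-m < s \le i\}$ and $\{s : j-m < s \le i\}$ differ by exactly the single element $s = j - m$, which is present iff $j - m \le i$; hence $b_n^{i,j-1} - b_n^{i,j} = \sum_{m \ge j-i} \nu_{n,\,j-m,\,m}$ (the free-summand contributions, being independent of $j$, cancel), and replacing $i$ by $i-1$ the sum runs instead over $m \ge j-i+1$. Subtracting leaves only the term with $m = j - i$, so $\mu_n^{i,j} = \nu_{n,\,i,\,j-i}$ for $i < j$; the substitution $i = s$, $j = s+m$ gives the equivalent identity $\nu_{n,s,m} = \mu_n^{s,s+m}$ for $m < \infty$. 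This establishes all three identities of the corollary.

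I expect the only real difficulty to be bookkeeping: keeping the half-open index intervals straight and handling $m = +\infty$ uniformly in the Betti-number sum. (If one wants these identities without assuming the filtered chain complex is finite, one should also note that boundedness below of $\flt$ forces $\nu_{n,s,m} = 0$ for $s \ll 0$, so all the sums above are finite; uniqueness of the decomposition, Lemma \ref{PHD-Uni}, makes the $\nu_{n,s,m}$ well defined, so the identities determine $b_n^{i,j}$ and $\mu_n^{i,j}$ unambiguously.)
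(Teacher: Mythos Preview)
Your argument is correct, but the order of deductions is the reverse of the paper's. The paper first identifies $\mu_n^{i,j}=\nu_{n,i,j-i}$ directly from the meaning of the two invariants (both count $n$-cycles born at filtration level $i$ and dying at level $j$, i.e.\ persisting for $j-i$ steps), and then derives the Betti-number formula by quoting the \emph{fundamental lemma of persistent homology} $b_n^{i,j}=\sum_{s\le i}\sum_{k>j}\mu_n^{s,k}$ from \cite[VII.1]{ComTop} and substituting $\mu_n^{s,k}=\nu_{n,s,k-s}$. You instead compute $b_n^{i,j}$ first, by reading off $\rank\bigl(x^{j-i}\colon PH_n^i\to PH_n^j\bigr)$ summand by summand from the decomposition of Theorem~\ref{Decomp}, and then recover $\mu_n^{i,j}$ from the alternating-sum definition. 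Your route is more self-contained (it does not invoke the fundamental lemma as a black box and makes explicit why each interval contributes exactly when $j-m<s\le i$), while the paper's route is shorter and leans on the standard reference; the two arguments are essentially dual, since the fundamental lemma and the alternating-sum definition of $\mu$ are inverse to each other.
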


\begin{proof}

This follows directly from Definition \ref{def-nu} with an index changing. 
The multiplicity $\nu_{n,s,m}$ of $H(\U[m])$ means the number of $n$-homology cycles generating at filtration level $s$ and surviving for $m$ filtration levels. 
By \cite[VII.1]{ComTop}, the persistent multiplicity $\mu_n^{i,j}$ means numbers of $n$-homology cycles generating at filtration level $i$ while getting killed at filtration level $j$. 
Hence $\mu_n^{i,j}=\nu_{n,i,j-i}$ and $\nu_{n,s,m}=\mu_n^{s,s+m}$.
Following the \textit{fundamental lemma of persistent homology} from \cite[VII.1]{ComTop}, we have $b_n^{i,j}= \sum_{s \le i} \sum_{k>j} \mu_n^{s,k}$.
Therefore
$$b_n^{i,j}=\sum_{s \le i} \sum_{k>j} \nu_{n,s,k-s} =\sum_{s \le i} \sum_{m+s>j} \nu_{n,s,m} = \sum_{j-m < s \le i} \nu_{n,s,m}.$$
\end{proof}

Recall that $E_{n,s}^{(r)}$ is the homogeneous component of $E^{(r)}$ with homological grading $n$ and $\Fx$-module grading $s$. Then we have the following proposition.

\begin{prop} \label{E-ns}
For any $r > 0$,
$$\dim_\F E_{n,s}^{(r)} = \nu_{n,s,\infty}+ \sum_{m \ge r} (\nu_{n,s,m} + \nu_{n-1,s-m,m}).$$
\end{prop}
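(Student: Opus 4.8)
The plan is to reduce the computation to the direct sum decomposition of Theorem \ref{Decomp} and then count contributions summand by summand using Lemma \ref{SS}. By Theorem \ref{SS-Decomp} (or directly from Theorem \ref{Decomp} together with the fact that passing to spectral sequence pages is additive over direct sums of chain complexes of free $\Fx$-modules), we have
$$E^{(r)} \cong \bigoplus_{n=-\infty}^\infty \bigoplus_{i=1}^{K_n} E^{(r)}(U_{n,s_n(i),m_n(i)}),$$
so $\dim_\F E_{n,s}^{(r)}$ is the sum of the dimensions of the bidegree-$(n,s)$ parts of the $E^{(r)}(U_{n',s',m'})$ over all summands.

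\textbf{Key steps.} First I would record, from Lemma \ref{SS}, exactly which summands $U_{n',s',m'}$ contribute to bidegree $(n,s)$ on page $r$. A summand of type $\U[\infty]=U_{n',s',\infty}$ has $E^{(r)} \cong \F\|n'\|\{s'\}$ for all $r\ge 1$, hence contributes $1$ to $\dim_\F E_{n,s}^{(r)}$ precisely when $n'=n$ and $s'=s$; summing over such summands gives the term $\nu_{n,s,\infty}$ by Definition \ref{def-nu}. Second, for a summand of type $\U[m']$ with $m'<\infty$, Lemma \ref{SS} gives $E^{(r)}(U_{n',s',m'})\cong \fc[n'+1][s'+m'] \oplus \fc[n'][s']$ when $1\le r\le m'$ and $0$ when $r>m'$. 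So such a summand contributes to bidegree $(n,s)$ on page $r$ exactly when $r\le m'$ and either (i) $n'=n$, $s'=s$ — contributing the first factor's "$\fc[n'][s']$" part with $n=n'$, which forces us to look at $\nu_{n,s,m'}$ — wait, I need to be careful: the "$\fc[n'][s']$" summand sits in homological degree $n'$ and module degree $s'$, while the "$\fc[n'+1][s'+m']$" summand sits in homological degree $n'+1$ and module degree $s'+m'$. So a summand $U_{n',s',m'}$ contributes to $(n,s)$ iff $r\le m'$ and either $(n',s')=(n,s)$ (from the $\fc[n'][s']$ part) or $(n'+1,s'+m')=(n,s)$, i.e. $n'=n-1$, $s'=s-m'$ (from the $\fc[n'+1][s'+m']$ part). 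Third, I would sum: the count of summands with $(n',s')=(n,s)$ and $m'=m\ge r$ is $\nu_{n,s,m}$, and the count with $n'=n-1$, $s'=s-m$ and $m'=m\ge r$ is $\nu_{n-1,s-m,m}$; summing over all $m\ge r$ (with $m<\infty$) yields $\sum_{m\ge r}(\nu_{n,s,m}+\nu_{n-1,s-m,m})$. Adding the $\nu_{n,s,\infty}$ from the first step gives the claimed formula. I should also note finiteness: by Theorem \ref{Decomp} the set $\{i \mid s_n(i)=s\}$ is finite for each $n,s$, so all these sums are finite and the bidegree-$(n,s)$ dimension is well-defined.

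\textbf{Main obstacle.} The calculation itself is bookkeeping; the one point requiring genuine care is the bidegree accounting for the $\U[m]$ summands — making sure that the "boundary" generator $\fc[n'+1][s'+m']$ is correctly attributed to homological degree $n'+1$ and module degree $s'+m'$ (this is the shift built into Definition \ref{def-decomp-type} and Definition \ref{def-bigrade}), which is exactly what produces the asymmetric index shift $\nu_{n-1,s-m,m}$ rather than something symmetric. A secondary subtlety worth a sentence is justifying that forming the $r$-th page commutes with the infinite direct sum $\bigoplus_n\bigoplus_i$; since exact couples, kernels, images and homology all commute with direct sums, and since in each bidegree only finitely many summands are nonzero, this is immediate, but it is the step that makes "decompose, then count" legitimate.
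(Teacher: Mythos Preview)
Your proposal is correct and follows essentially the same approach as the paper: both arguments invoke the direct sum decomposition of Theorem \ref{SS-Decomp}, read off from Lemma \ref{SS} which summands contribute a copy of $\fc[n][s]$ on page $r$, and then translate the resulting count into the multiplicities $\nu$. The paper packages the bookkeeping via explicit index sets $I_{n,s}$, $I_{n,s,\infty}$, $I_{n,s,m\ge r}$, $I_{n-1,s-m,m\ge r}$, but the content is identical to your case analysis.
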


\begin{proof}
From Theorem \ref{SS-Decomp}, the spectral sequence decomposes uniquely as $E^{(r)} \cong \bigoplus_{l=-\infty}^\infty  \bigoplus_{i=1}^{K_l}  E^{(r)}(U_{l,s_l(i),m_l(i)})$. Consider an extended sequence $\{(l,s_l(i),m_l(i))\} \subseteq \Z \times \Z \times \infZ $ and its subsequence $\{(l,s_l(i),m_l(i))\}_{(l,i) \in I_{n,s}}$ where 
$$I_{n,s} :=  \{ (l,i) \in \Z \times \infZ \ | \ 1 \le i \le K_l ,\ \fc[n][s] \subseteq E^{(r)}(U_{l,s_l(i),m_l(i)}) \}.$$
Then $E^{(r)}_{n,s} \cong \bigoplus_{(l,i) \in I_{n,s}} \fc[n][s] $ since each $E^{(r)}(U_{l,s_l(i),m_l(i)})$ contributes at most one summand of $\fc[n][s] $ in $E^{(r)}_{n,s}$ from Lemma \ref{SS}. 
Thus $$\dim_\F E^{(r)}_{n,s} =  \dim_\F \F^{\oplus |I_{n,s}|} = |I_{n,s}|.$$
It suffices to count $|I_{n,s}|$ in terms of $\nu$.

We first separate $I_{n,s} = I_{n,s,\infty} \sqcup I'_{n,s}$ where $I_{n,s,\infty} :=  \{ (n,i) \ | \ 1 \le i \le K_n ,\ s_n(i)=s ,\ m_n(i)=\infty \}$ and $I'_{n,s} :=  I_{n,s} \backslash I_{n,s,\infty}$, because we have $E^{(r)}(\U) \cong  \fc[n][s] $ for all $r \ge 1$ in Lemma \ref{SS}.
Notice that $\nu_{n,s,\infty}= |I_{n,s,\infty}|$.

Next consider $I'_{n,s}$. According to Lemma \ref{SS}, $E^{(r)}(U_{l,s,m}) \cong 0$ for all $m < r$. Also, any rest of the summand $\fc[n][s] $ in $E_{n,s}^{(r)}$ coming from either $E^{(r)}(\U[m]) \cong \fc[n+1][s+m] \oplus \fc[n][s] $ or $E^{(r)}(U_{n-1,s-m,m}) \cong \fc[n][s] \oplus \fc[n-1][s-m] $ for all $m \ge r$.
Therefore $I'_{n,s} = I_{n,s,m \ge r} \sqcup I_{n-1,s-m,m \ge r}$ where 
$$I_{n,s,m \ge r} := \{ (n,i) \ | \ 1 \le i \le K_n ,\ s_n(i)=s ,\ m_n(i) \ge r \} $$
and $$I_{n-1,s-m,m \ge r} := \{ (n-1,i) \ | \ 1 \le i \le K_{n-1} ,\ s_n(i)  =s - m_n(i),\ m_n(i) \ge r \} .$$
Thus $|I_{n,s,m \ge r}| = \sum_{m \ge r} \nu_{n,s,m}$ and $|I_{n-1,s-m,m \ge r}| = \sum_{m \ge r} \nu_{n-1,s-m,m}$.
It is clear that $I_{n,s,m \ge r} \cap I_{n-1,s-m,m \ge r} = \emptyset$ since the index of $I_{n,s,m \ge r}$ coming form homological degree $n$ while the other coming from homological degree $n-1$. 
Hence $$|I_{n,s}| = |I_{n,s,\infty}| + |I_{n,s,m \ge r}| + |I_{n-1,s-m,m \ge r}| = \nu_{n,s,\infty}+ \sum_{m \ge r} (\nu_{n,s,m} + \nu_{n-1,s-m,m}).$$
\end{proof}

Now we prove Theorem \ref{SS=PH}.

\begin{proof}[Proof of Theorem \ref{SS=PH}]
The first part of the theorem follows from Proposition \ref{E-ns}.
If $\{E^{(r)}\}$ is the spectral sequence of the persistent chain complex $(PC,d_x)$ satisfying conditions in Theorem \ref{Decomp}, it suffices to show that
\begin{itemize}
   \item $\nu_{n,s,\infty}=\dim_\F E_{n,s}^{(\infty)}$ and
   \item $\nu_{n,s,m}=\dim_\F E_{n,s}^{(m)} - \dim_\F E_{n,s}^{(m+1)} - \nu_{n-1,s-m,m}$.
  \end{itemize}

From Corollary \ref{SS-LocCol}, $\{E^{(r)}\}$ collapses locally to $E^{(\infty)}$. 
Thus $E_{n,s}^{(\infty)}$ exists for every $n,s \in \Z$ and is also finite $\F$-dimensional. 
It consists of the free part of the homology group with homogeneous grading. 
Suppose $E_{n,s}^{(r)}$ collapses at the $M$-th page, which means $E_{n,s}^{(r)} \cong E_{n,s}^{(r+1)} \cong E_{n,s}^{(\infty)}$ for all $r \ge M$ but $E_{n,s}^{(M-1)} \not \cong E_{n,s}^{(M)}$. 
Consider $\dim_\F E_{n,s}^{(r)} = \nu_{n,s,\infty}+ \sum_{m \ge r} (\nu_{n,s,m} + \nu_{n-1,s-m,m})$ from Proposition \ref{E-ns}. 
For all $r \ge M$, by $E_{n,s}^{(r)} \cong E_{n,s}^{(r+1)}$ we have
$$ \nu_{n,s,r} + \nu_{n-1,s-m,r} =\dim_\F E_{n,s}^{(r)} - \dim_\F E_{n,s}^{(r+1)} =0.$$ 
Thus $\sum_{m \ge M} (\nu_{n,s,m} + \nu_{n-1,s-m,m}) = 0$ and $\dim_\F E_{n,s}^{(\infty)} = \dim_\F E_{n,s}^{(M)} = \nu_{n,s,\infty}$. Hence $\nu_{n,s,\infty}$ is determined by the infinite page $E_{n,s}^{(\infty)}$.

For general $\nu_{n,s,m}$, consider Proposition \ref{E-ns} again. 
We have $$\dim_\F E_{n,s}^{(m)} - \dim_\F E_{n,s}^{(m+1)} = \nu_{n,s,m} + \nu_{n-1,s-m,m}.$$
Thus $\nu_{n,s,m}=\dim_\F E_{n,s}^{(m)} - \dim_\F E_{n,s}^{(m+1)} - \nu_{n-1,s-m,m}$.

If the filtration of $(C,d,\flt)$ is bounded below uniformly, the $\Fx $-grading of $(PC,d_x)$ is therefore bounded below uniformly. 
Then there exists $S \in \Z$, such that $\nu_{n,s,m} = 0$ for all $n \in \Z$, $m \in \infZ $  and $s < S$. 
Thus all $\nu_{n,S,m} = \dim_\F E_{n,S}^{(m)} - \dim_\F E_{n,S}^{(m+1)}$ are in terms of dimensions of spectral sequence.
Finally, each nonzero $\nu_{n,s,m}$ is determined recursively by terms from only the spectral sequence.
\end{proof} 

\bibliographystyle{acm}
\bibliography{PHSS}

\end{document}